\numberwithin{equation}{section}
\theoremstyle{definition}
\newtheorem{definition}{Definition}[section]
\theoremstyle{definition}
\newtheorem{remark}[definition]{Remark}
\theoremstyle{plain}
\newtheorem{theorem}[definition]{Theorem}
\newtheorem{lemma}[definition]{Lemma}
\newtheorem{cor}[definition]{Corollary}
\newcommand{\beas}{\begin{eqnarray*}}
\newcommand{\eeas}{\end{eqnarray*}}
\newcommand{\bes} {\begin{equation*}}
\newcommand{\ees} {\end{equation*}}
\newcommand{\be} {\begin{equation}}
\newcommand{\ee} {\end{equation}}
\newcommand{\bea} {\begin{eqnarray}}
\newcommand{\eea} {\end{eqnarray}}
\newcommand{\zt}{\zeta}
\newcommand{\om}{\omega}
\newcommand{\Om}{\Omega}
\newcommand{\D}{\mathbb{D}}
\newcommand{\zbar}{\overline z}
\newcommand{\wbar}{\overline w}
\newcommand{\wt}{\widetilde}
\newcommand{\cont}{\mathcal{C}}
\newcommand{\hol}{\mathcal{O}}
\newcommand{\sh}{\operatorname{sh}}
\newcommand{\Cn}{\mathbb{C}^n}
\newcommand{\C} {\mathbb{C}} 
\newcommand{\Z} {\mathbb{Z}}
\newcommand{\N} {\mathbb{N}}
\newcommand{\p}{\mathbb{P}^1}
\begin{document}

\title{On the dimension of Bergman spaces on $\p$}
\author{A.-K. Gallagher}
\address{Gallagher Tool \& Instrument LLC, Redmond, WA}
\email{anne.g@gallagherti.com}
\author{P. Gupta}
\address{Department of Mathematics, Indian Institute of Science, Bangalore}
\email{purvigupta@iisc.ac.in}
\author{L. Vivas}
\address{Department of Mathematics, The Ohio State University, Columbus, OH}
\email{vivas@math.osu.edu}
\thanks{P. Gupta and L. Vivas were partially supported by UGC CAS-II grant (GrantNo. F.510/25/CAS-II/2018(SAP-I)) and NSF grant 1800777, respectively.}

\begin{abstract}
Inspired by a result of Sz\H{o}ke, we give potential-theoretic characterizations of the dimension of the Bergman space of holomorphic sections of the restriction of a holomorphic line bundle on $\p$ to some open set $D\subset\p$.
\end{abstract}

\keywords{Bergman space, Riemann sphere, holomorphic line bundles}
\subjclass[2020]{32A36, 32L05, 30F99}
\maketitle

\section{Introduction}\label{S:Intro}

%%%%%%%%%%%%%%%%%%%%%%%%%%%%%%%%%%%%%%%%%%%%%%%%%%%%%%%%%%%%%%

The Bergman space, $A^2(\Omega)$, of an open set $\Omega\subset\mathbb{C}^n$ is the
vector space of $L^2(\Omega)$-integrable holomorphic functions on $\Omega$.
Through work of Carleson \cite{Ca67} and Wiegerinck \cite{Wi84} the dimension of the Bergman space of an open set in the complex plane is completely characterized by the polarity of the complement, see the equivalences $(a)$--$(c)$ in Theorem \ref{T:WC}. Wiegerinck \cite{Wi84} also constructs domains in  $\mathbb{C}^2$ whose Bergman spaces are nontrivial but finite dimensional; see \cite{PZ05,Dinew07,Jucha12,GaHaHe17,PZ17,Boud21} for further partial results on  the dimension of Bergman spaces of open sets in higher dimensions. 

The results on the dimension of the Bergman space of open sets in $\mathbb{C}$ may be consolidated as the following theorem.

\begin{theorem}[\cite{Ca67},\cite{Wi84},\cite{GaHaHe17},\cite{GaLeRa21}]\label{T:WC} Let $K\subsetneq\C$ be a closed subset. Then the following are equivalent.
\begin{itemize}
\item[$(a)$] $K$ is polar.
\item [$(b)$] $A^2(\C\setminus K)= \{0\}$.
\item [$(c)$] $\dim A^2(\C\setminus K)<\infty$.
\item [$(d)$] There exists no bounded $\psi\in\mathcal{C}^\infty(\mathbb{C}\setminus K)$ such that $\triangle\psi>0$ on $\mathbb{C}\setminus K$.
\end{itemize}
\end{theorem}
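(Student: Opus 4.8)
The plan is to establish the cycle of implications $(a)\Rightarrow(b)\Rightarrow(c)\Rightarrow(d)\Rightarrow(a)$, or rather to assemble the theorem from its four cited sources by proving each non-trivial link. The implication $(b)\Rightarrow(c)$ is immediate. For $(a)\Rightarrow(b)$, I would use the classical fact that a polar set has zero logarithmic capacity and hence (being, after exhaustion, a countable union of compact polar sets) is removable for $L^2$ holomorphic functions: given $f\in A^2(\C\setminus K)$, one shows $f$ extends holomorphically across $K$ to all of $\C$, and a nonconstant entire function cannot be $L^2$ on $\C$ while constants are not $L^2$ either, so $f\equiv 0$. The removability step is the heart of this direction and rests on capacity estimates — near a polar set one can find cutoff functions $\chi_\eps$ with $\|\dbar\chi_\eps\|$ small, so that $f\chi_\eps$ solves a $\dbar$-problem whose solution forces the singularity to be removable (this is essentially Carleson's argument).

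For the contrapositive of $(c)\Rightarrow(a)$, suppose $K$ is non-polar; then it has positive capacity, and I would like to produce infinitely many linearly independent elements of $A^2(\C\setminus K)$. The standard device is the Cauchy transform (or Newtonian/logarithmic potential) of equilibrium-type measures supported on compact non-polar pieces $K_0\subset K$: the function $z\mapsto \int (z-w)^{-1}\,d\mu(w)$ is holomorphic off $\supp\mu$, decays like $1/|z|$ at infinity (hence is $L^2$ near $\infty$), and is non-trivial precisely because $\mu$ has positive energy. Multiplying by suitable rational functions with poles pushed to infinity, or differentiating the kernel, yields an infinite-dimensional family; one must check $L^2$-integrability near $K$ itself, which is where positivity of capacity (as opposed to mere nonemptiness) enters. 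The implication $(d)\Rightarrow(a)$, i.e. that a non-polar complement admits a bounded smooth strictly subharmonic function, is the potential-theoretic counterpart: on $\C\setminus K$ with $K$ non-polar one can smooth and truncate a Green's function or a potential to obtain such a $\psi$; conversely $(a)\Rightarrow(d)$ (no such $\psi$ exists when $K$ is polar) follows because a bounded subharmonic function on $\C$ minus a polar set extends subharmonically across $K$, hence across all of $\C$, and a bounded subharmonic function on $\C$ is constant by the Liouville property — contradicting $\triangle\psi>0$.

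The main obstacle I anticipate is the quantitative potential theory in the direction $(c)\Rightarrow(a)$ and the removability in $(a)\Rightarrow(b)$: in both cases one needs sharp capacity/energy estimates rather than soft arguments, and in particular one must handle the case where $K$ is non-compact by exhausting it by compact subsets and checking that non-polarity of $K$ forces non-polarity of some compact piece — a consequence of countable subadditivity of capacity. The bookkeeping to pass from a single potential function to an infinite-dimensional space, while keeping every function in $L^2(\C\setminus K)$ simultaneously near $K$ and near $\infty$, is the step most likely to require care; everything else is either immediate or a citation to the listed references.
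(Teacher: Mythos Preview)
Your plan is essentially correct and the ingredients match those the paper assembles from its four sources: removability of polar sets for $L^2$ holomorphic functions (this is Lemma~\ref{L:removable} in the paper), the Cauchy transform of an equilibrium measure together with Wiegerinck's linear-algebra trick to push the order of vanishing at $\infty$ (Subsection~\ref{SS:CT}), and the potential-theoretic construction of a bounded strictly subharmonic $\psi$ from the equilibrium potential of a compact non-polar piece (Subsection~\ref{SS:(d)=>(a)}).

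Two remarks on the structure. First, your announced cycle $(a)\Rightarrow(b)\Rightarrow(c)\Rightarrow(d)\Rightarrow(a)$ is not what you actually argue: you never address $(c)\Rightarrow(d)$ directly, but instead prove $(c)\Rightarrow(a)$ and then $(a)\Rightarrow(d)$ via the removability-for-subharmonic-functions plus Liouville argument. That route is fine logically, and your $(a)\Rightarrow(d)$ argument is clean and correct. However, the paper attributes $(c)\Rightarrow(d)$ to \cite{GaHaHe17}, whose method is different: one shows the contrapositive by H\"ormander's weighted $\overline\partial$-estimates, using the bounded strictly subharmonic $\psi$ as a weight to manufacture infinitely many independent $L^2$ holomorphic functions (this is Lemma~\ref{L:GaHaHemodification}). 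Your route through $(a)$ is more elementary and avoids $\overline\partial$-methods entirely; the paper's H\"ormander route is what generalizes to line bundles on $\p$ in Theorem~\ref{T:main}, where one needs to twist the metric rather than just quote Liouville.

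Second, a small caution on your $(c)\Rightarrow(a)$ step: ``multiplying by rational functions with poles pushed to infinity'' means multiplying by polynomials, which destroys $L^2$-integrability at $\infty$. The actual mechanism (Wiegerinck's, reproduced in Subsection~\ref{SS:CT}) is to take linear combinations of difference quotients $\frac{f(z)-f(z_\ell)}{z-z_\ell}$ so as to \emph{increase} the order of vanishing at $\infty$; each such combination stays in $A^2(\C\setminus E)$ and the process iterates. Your parenthetical ``differentiating the kernel'' is closer to the right idea.
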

 That the Bergman space is either trivial or infinite dimensional, i.e., the equivalence of $(b)$ and $(c)$, is due to Wiegerinck, see \cite{Wi84}. Carleson shows  in \cite[Theorem 1, \S VI]{Ca67} that the Bergman space is nontrivial if and only if the logarithmic capacity of $K$ is positive, that is, the equivalence of $(a)$ and $(c)$. Property $(d)$ is an additional, potential-theoretic characterization of  open sets in $\mathbb{C}$ with finite dimensional Bergman spaces. A proof of implication $(c)\Rightarrow(d)$ may be found in \cite{GaHaHe17}, while implication $(d)\Rightarrow(c)$ is due to \cite[Prop. 5.1]{GaLeRa21}. 
 
 Recently, Sz\H{o}ke \cite{Sz20} considered the dimension problem
 in the setting of compact Riemann surfaces. He conjectures that, given a  holomorphic line bundle $L$ on a compact Riemann surface $M$,   the Bergman space, $A^2(D;L)$, of holomorphic sections of the restriction of  $L$  to an open set $D\subseteq M$ either coincides with the space of global holomorphic sections of $L$, or is infinite dimensional; see Subsection~\ref{SS:BSforSections} for the precise definition of $A^2(D;L)$.  Furthermore, he proves this conjecture in the case when $M$ is $\p$,  the Riemann sphere.
 
 \begin{theorem}\cite{Sz20}\label{T:Sz}
   Let $L$ be a holomorphic line bundle  on $\p$, and $D\subset\p$ be an open set. Then $A^2(D;L)$ is either equal to $\Gamma(\p;L)$, i.e., the space of global holomorphic sections on $\p$, or infinite dimensional.
 \end{theorem}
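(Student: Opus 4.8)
The plan is to classify the bundle, move everything to an affine chart, and reduce to the planar Theorem~\ref{T:WC}. Recall that every holomorphic line bundle on $\p$ is isomorphic to $\hol(d)$ for a unique $d=\deg L\in\Z$, and that, since $\p$ is compact, $\dim A^2(D;L)$ is unchanged if the Hermitian metric on $L$ and the volume form on $\p$ are replaced by any other smooth ones; so I would work with the Fubini--Study data. If $D=\p$ then $A^2(\p;L)=\Gamma(\p;L)$ by compactness, so assume $D$ is a nonempty proper open subset and fix $p\in\partial D$ (nonempty, as $\p$ is connected; and $p\notin D$). In the chart $\C\cong\p\setminus\{p\}$ the set $D$ becomes an \emph{unbounded} open subset of $\C$ because $p\in\overline D$ — this choice of $p$ is used crucially below — the bundle trivializes, the global sections of $L$ become the space $\mathcal P_d$ of polynomials of degree $\le d$ (so $\mathcal P_d=\{0\}$ when $d<0$), and $A^2(D;L)$ becomes the weighted Bergman space $A^2_w(D):=\{f\in\hol(D):\int_D|f|^2\,w\,dA<\infty\}$ with $w(z)\asymp(1+|z|^2)^{-(d+2)}$. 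Since $\p$ is compact, $\mathcal P_d\subseteq A^2_w(D)$. Writing $K:=\C\setminus D$ (closed, and polar in $\C$ precisely when $\p\setminus D$ is polar), it suffices to prove: (i) if $K$ is polar then $A^2_w(D)=\mathcal P_d$; and (ii) if $K$ is not polar then $\dim A^2_w(D)=\infty$. Together with $\mathcal P_d\cong\Gamma(\p;L)$ this gives the theorem.

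Assertion (i) I expect to be routine: a closed polar set is removable for holomorphic functions that are locally $L^2$ across it — this is the removability underlying the implication $(a)\Rightarrow(b)$ of Theorem~\ref{T:WC}. Since $w$ is bounded below on compact subsets of $\C$, any $f\in A^2_w(D)$ is locally $L^2$ near each point of $K$ and hence extends to an entire function $\widetilde f$; as $K$ is a null set, $\widetilde f\in A^2_w(\C)$, and the elementary fact that $\int_\C|z|^{2n}(1+|z|^2)^{-(d+2)}\,dA$ converges if and only if $n\le d$ shows $A^2_w(\C)=\mathcal P_d$. Hence $f\in\mathcal P_d$.

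Assertion (ii) is the crux. If $K$ is not polar it contains a compact set $E$ of positive logarithmic capacity, which I may take inside a disk $B(0,R_0)$; let $V$ be the unbounded component of $\C\setminus E$. Then $\C\setminus V$ is a compact set containing $E$, hence non-polar, so Theorem~\ref{T:WC} gives $\dim A^2(V)=\infty$. Each $h\in A^2(V)$ is holomorphic on $\{|z|>R_0\}\subseteq V$ with a Laurent expansion $\sum_{n}c_n z^n$ there, and $L^2$-integrability forces $c_n=0$ for $n\ge-1$, so $h(z)=O(|z|^{-2})$. Imposing the additional linear conditions $c_n=0$ for $-M<n\le-2$, where $M:=\max\{2,-d\}$, cuts out an \emph{infinite-dimensional} subspace $W\subseteq A^2(V)$ of functions with $h(z)=O(|z|^{-M})$; here $M$ is chosen so that $M>-d-1$, equivalently so that $|z|^{-2M}w(z)$ is integrable near $\infty$. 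Now send $h\in W$ to the holomorphic function on $D$ obtained by extending $h$ by zero across the bounded components of $\C\setminus E$ and restricting to $D$ (legitimate since $E\subseteq K$ gives $D\subseteq\C\setminus E$). This linear map is injective: $D$ is unbounded, so $D\cap\{|z|>R_0\}$ is a nonempty open subset of the connected set $V$, whence a function vanishing there vanishes on all of $V$. And it takes values in $A^2_w(D)$: splitting the $A^2_w(D)$-integral at $|z|=R_0$, the inner part is finite because $w$ is bounded on $B(0,R_0)$ and $h\in L^2(V)$, and the outer part because $|h(z)|\lesssim|z|^{-M}$ and $w(z)\lesssim|z|^{-2(d+2)}$ there. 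Therefore $\dim A^2_w(D)\ge\dim W=\infty$.

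The delicate point is (ii), specifically that the weight $w$ \emph{grows} at infinity once $\deg L\le-3$: that is what forces the vanishing of low-order Laurent coefficients of the functions imported from $A^2(V)$, and the only thing to verify there is that these conditions number only finitely many, so that infinite-dimensionality is preserved. The other place demanding care is the injectivity of the extend-by-zero-and-restrict map, which is exactly why the chart is chosen so that $D$ accumulates at $\infty$ and hence meets the unbounded component $V$. Assertion (i) and the reductions are comparatively soft.
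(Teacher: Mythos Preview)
Your proof is correct. The reduction to an affine chart and your argument for (i) coincide with the paper's proof of $(a)\Rightarrow(b)$ in Theorem~\ref{T:main}. For (ii) your route is genuinely different from the Cauchy-transform argument of Subsection~\ref{SS:CT}: when $k<-2$ the paper reproduces Carleson's construction of a nontrivial $L^2$-function as the Cauchy transform of a difference of equilibrium measures and then runs Wiegerinck's inductive trick (Subcases~1 and~2) to manufacture functions $g_j$ vanishing to arbitrarily high order at~$\infty$, finally checking $g_j\in A^2_{\phi_k}(D_*)$ for $j\ge -k$. You instead invoke Theorem~\ref{T:WC} once on the unbounded component $V$ of $\C\setminus E$ and kill finitely many Laurent coefficients --- finitely many linear functionals on the infinite-dimensional space $A^2(V)$ --- to force the decay $O(|z|^{-M})$ needed to absorb the growing weight. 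This is shorter, treats all $k$ uniformly, and your device of centering the chart at a point of $\partial D$ (so that $D$ is unbounded and hence meets $V$) neatly secures injectivity of the extend-by-zero-and-restrict map; the paper's version, by contrast, exhibits the sections explicitly and does not need $D_*$ to be unbounded. One cosmetic point: the estimate $|h(z)|\lesssim|z|^{-M}$ holds only for $|z|>R_0'$ with some $R_0'>R_0$, so the integral split should be taken at $R_0'$ rather than $R_0$; the extra annulus is harmless since $h\in L^2(V)$ and $w$ is bounded there.
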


Sz\H{o}ke's proof is a modification of Wiegerinck's proof in \cite{Wi84}, and relies on the relatively simple algebraic structure of holomorphic line bundles on $\p$. While this proof is hard to generalize to other compact Riemann surfaces, Sz\H{o}ke gives a partial result for general compact Riemann surfaces. However, he does not provide any potential-theoretic characterizations, akin to Theorem~\ref{T:WC}, for the general case; see the remarks above Proposition 1.3 in \cite{Sz20}.  An objective of this paper is to fill this gap for $\p$. In fact, our main result is a complete analogue of Theorem~\ref{T:WC} for Bergman spaces of holomorphic sections on open subsets of $\p$. 
 
\begin{theorem}\label{T:main} Let $\Pi:L\rightarrow \p$ be a holomorphic line bundle. 
Suppose that $K\subsetneq\p$ is a compact subset. Then the following are equivalent.
	\begin{itemize}
\item[$(a)$] $K$ is polar.
\item[$(b)$] $A^2(\p\setminus K;L)=\Gamma(\p;L)$.
\item[$(c)$] $\dim A^2(\p\setminus K;L)<\infty$. 
\item[$(d)$] There exists no bounded function $\psi\in\mathcal{C}^\infty(\p\setminus K)$ such that $i\partial\bar{\partial}\psi\geq \omega$ on $\p\setminus K$ for some volume form $\omega$ on $\p$.
%\item[$(d)$] There exist a Hermitian metric $h$ on $L$ and a volume form $\omega$ on $\p$ such that there is no bounded function $\psi\in\mathcal{C}^\infty(\p\setminus K)$ satisfying
%  \begin{align}\label{E:BasicEst}
%     \Theta_{e^{-\psi} h}\geq \omega\;\;\text{ on }\p\setminus K.
%  \end{align}
\end{itemize}  
\end{theorem}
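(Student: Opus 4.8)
The plan is to establish the cycle of implications $(a)\Rightarrow(b)\Rightarrow(c)\Rightarrow(d)\Rightarrow(a)$, mirroring the structure of Theorem~\ref{T:WC} but working with sections of $L$ rather than functions. The first step is to reduce to concrete data: since every holomorphic line bundle on $\p$ is $\hol(k)$ for some $k\in\Z$, a section of $L|_D$ is, in the affine chart $\C\subset\p$, just a holomorphic function $f$ on $D\cap\C$, with a transition condition at $\infty$ governed by $k$, and the $L^2$-norm with respect to a fixed Hermitian metric on $L$ and volume form on $\p$ becomes $\int_{D\cap\C}|f(z)|^2 h(z)^2\,\frac{dA(z)}{(1+|z|^2)^2}$ for a suitable positive weight. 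Thus $A^2(\p\setminus K;L)$ is comparable to a weighted Bergman space on $\C\setminus K$ (after possibly moving a point into $K$ by a M\"obius transformation so that $\infty\in K$, which we may assume since $K\ne\p$ and polarity and all other conditions are M\"obius-invariant).

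For $(a)\Rightarrow(b)$: if $K$ is polar it has zero area, so any $f\in A^2(\p\setminus K;L)$ is an $L^2$ holomorphic function on $\C\setminus K$ that, by Theorem~\ref{T:WC} applied to the classical Bergman space (polar sets are removable for $L^2$ holomorphic functions — this is the classical removable-singularity part), extends holomorphically across $K\cap\C$; the extension is then a global section of $L$, giving $A^2(\p\setminus K;L)\subseteq\Gamma(\p;L)$, and the reverse inclusion is immediate because global sections are bounded hence square-integrable. The implication $(b)\Rightarrow(c)$ is trivial since $\dim\Gamma(\p;L)<\infty$ (it is $k+1$ if $k\ge0$ and $0$ otherwise). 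For $(c)\Rightarrow(d)$: this is where I expect the main work, and I would argue the contrapositive following the strategy behind $(c)\Rightarrow(d)$ in Theorem~\ref{T:WC} (cf.\ \cite{GaHaHe17}). Given a bounded $\psi\in\cont^\infty(\p\setminus K)$ with $i\dbar\partial\psi\ge\omega$, one uses $\psi$ as a weight in a H\"ormander-type $\dbar$-estimate on $\p\setminus K$ to manufacture infinitely many independent $L^2$ sections: solve $\dbar u = \dbar(\chi s)$ for a compactly-supported cutoff $\chi$ and local model sections $s$ concentrated near distinct points, with $L^2$ control coming from the strict positivity of the curvature $i\dbar\partial\psi$, and check the solutions have no cancellation so that $\chi s - u$ are genuinely new, non-extendable sections. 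The boundedness of $\psi$ is what keeps the twisted $L^2$ norm comparable to the untwisted one, so the produced sections lie in $A^2(\p\setminus K;L)$; producing infinitely many forces $\dim A^2=\infty$.

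Finally $(d)\Rightarrow(a)$: argue the contrapositive. If $K$ is non-polar, then by classical potential theory $\C\setminus K$ (equivalently $\p\setminus K$, after normalizing $\infty\in K$) carries a negative Green's function, or more directly a bounded subharmonic exhaustion-type function with strictly positive Laplacian — indeed property $(d)$ of Theorem~\ref{T:WC} already tells us that non-polarity of $K$ yields a bounded $\psi_0\in\cont^\infty(\C\setminus K)$ with $\triangle\psi_0>0$. One then transplants $\psi_0$ to $\p\setminus K$ and corrects it near $\infty$: since $\infty$ is an interior point of $\p\setminus K$ we only need to patch $\psi_0$ with a smooth local potential there, and after adding a large constant multiple of a fixed global smooth function whose $i\dbar\partial$ is controlled, the sum $\psi$ is bounded on $\p\setminus K$ and satisfies $i\dbar\partial\psi\ge\om$ for a suitable volume form $\om$. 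The main obstacle throughout is the $(c)\Rightarrow(d)$ direction: one must carefully set up the weighted $\dbar$-machinery on the (possibly very irregular) open set $\p\setminus K$, ensure the H\"ormander estimate applies despite the lack of pseudoconvexity-type hypotheses on $K$, and verify linear independence of the constructed sections; the remaining implications are comparatively soft, reducing to the planar results in Theorem~\ref{T:WC} plus the elementary structure theory of line bundles on $\p$.
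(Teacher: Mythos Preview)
Your overall scheme $(a)\Rightarrow(b)\Rightarrow(c)\Rightarrow(d)\Rightarrow(a)$ matches the paper's exactly, and your treatments of $(a)\Rightarrow(b)$, $(b)\Rightarrow(c)$, and $(c)\Rightarrow(d)$ are essentially the paper's arguments (reduction to a weighted planar Bergman space via a chart sending a point of $K$ to $\infty$, removability of closed polar sets for $L^2$ holomorphic functions, and the H\"ormander argument of \cite{GaHaHe17}, respectively).

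The step $(d)\Rightarrow(a)$, however, has a genuine gap. First there is an internal inconsistency: you normalize so that $\infty\in K$, but then write ``since $\infty$ is an interior point of $\p\setminus K$'', which is the opposite. More substantively, invoking condition $(d)$ of Theorem~\ref{T:WC} only furnishes a bounded $\psi_0\in\cont^\infty(\C\setminus K_*)$ with $\triangle\psi_0>0$ \emph{pointwise}. This is strictly weaker than what you need: in the affine chart, any volume form $\omega$ on $\p$ satisfies $\omega(z)\ge c\,(1+|z|^2)^{-2}\,i\,dz\wedge d\bar z$ for some $c>0$, so $i\partial\bar\partial\psi\ge\omega$ forces the quantitative lower bound $\triangle\psi(z)\gtrsim(1+|z|^2)^{-2}$ as $|z|\to\infty$, which the bare statement of Theorem~\ref{T:WC}$(d)$ does not deliver. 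Your proposed correction --- ``adding a large constant multiple of a fixed global smooth function whose $i\partial\bar\partial$ is controlled'' --- cannot work either, since no smooth function on the compact manifold $\p$ has everywhere-positive $i\partial\bar\partial$ (maximum principle). The paper closes this gap by constructing $\psi$ explicitly from the potential $p=p_\nu$ of the equilibrium measure of a compact nonpolar piece of $K_*$: with a compactly supported correction $\epsilon\chi$, the function $\psi_*=e^{-p}+\epsilon\chi$ is shown to satisfy $\triangle\psi_*(z)\ge\tau_2|z|^{-3}$ for large $|z|$ and $\triangle\psi_*\ge\tau_3$ on a fixed disc, which dominates $(1+|z|^2)^{-2}$ everywhere and therefore yields $i\partial\bar\partial\psi\ge\omega$ on $\p\setminus K$ for a suitable $\omega$.
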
  

The implications $(a)\Rightarrow(b)\Rightarrow(c)\Rightarrow(a)$ are proved using some basic results in potential theory. 
In particular, our proof of $(c)\Rightarrow(a)$ in Subsection~\ref{SS:CT} is inspired by Wiegerinck's proof in \cite{Wi84}, but uses Carleson's construction of a nontrivial $L^2$-integrable holomorphic function as seen in \cite[Theorem 9.5, Ch. 21]{Ca67}. We include this proof of the implication $(c)\Rightarrow(a)$, because it does not rely on partial differential equations techniques, unlike our proof of $(c)\Rightarrow(d)\Rightarrow(a)$.

As in the proof of Theorem~\ref{T:WC}, the proof of the implication  $(c)\Rightarrow(d)$ in
 Theorem~\ref{T:main} is done by using H\"ormander's weighted $L^2$-method for solving the Cauchy--Riemann equations. 
 We note that the equivalence of $(a)$ and $(d)$ can be paraphrased as: $\p\setminus K$ is nonpolar if and only if, for every Hermitian metric $h$ on $L$ and  volume form $\omega$ on $\p$, there is a bounded function  $\psi\in\mathcal{C}^\infty(\p\setminus K)$ satisfying
  \begin{align*}
     i\Theta_{e^{-\psi}h}\geq \omega\;\;\text{ on }\p\setminus K.
  \end{align*}
That is, whenever $K$ is nonpolar, the given Hermitian metric $h$ on $L$ may be twisted such that the newly obtained Hermitian metric on $L|_{D}$ has positive curvature, see \cite[Theorem 3.11.2]{NaRa11} for a related result on general open Riemann surfaces.  
This positivity lets one use 
H\"ormander's method successfully; the boundedness of $\psi$ ensures that the resultant estimates are for the Bergman spaces associated to the given Hermitian metric $h$ on $L$.

The proof of $(d)\Rightarrow(a)$ in Theorem~\ref{T:main} is also motivated by the proof of the same implication in Theorem~\ref{T:main}. In the latter, one uses the potential function associated to the equilibrium measure of a compact, nonpolar set to construct the strictly subharmonic, bounded weight function $\psi$. This construction may be done in a local chart to prove the implication
 $(d)\Rightarrow(a)$ in Theorem~\ref{T:main}. 
In fact, the proofs of the equivalences in Theorem~\ref{T:main} are all done in a local chart. However, these proofs, except for the direct proof of $(c)\Rightarrow(a)$  in Subsection~\ref{SS:CT},  do not depend on the particular structure of $L$ and $h$ so that an extension of Theorem~\ref{T:main} to compact Riemann surfaces seems feasible. In fact, we will consider this question for compact Riemann surfaces in a forthcoming paper. 

As a byproduct of the proof of the implication $(c)\Rightarrow(d)$ of Theorem~\ref{T:main}, one obtains a complete description of the dimension of the weighted Bergman space $A^2_{e^{-\psi}}(\Omega)$, where $\Omega\subset\mathbb{C}$ is an open set, and $\psi$ is a subharmonic function on $\Omega$. This is a combination of Corollary~\ref{C:LeThesisminusK}, which is a
 version of Theorem~\ref{T:WC} for $A^2_{e^{-\psi}}(\Omega)$, and the complete characterization of the dimension of $A_{e^{-\psi}}(\mathbb{C})$ due to Borichev, Le, and Youssfi; see \cite[Theorem 2.6]{Le21}.
 
In \cite{Wi84}, Wiegerinck gives examples of domains in $\C^2$ that have nontrivial, but finite dimensional Bergman spaces. These examples immediately show that the dichotomy conjectured for compact Riemann surfaces does not hold for the Bergman spaces of holomorphic sections of a specific holomorphic line bundle, $L=\hol(-3)$, on the complex projective space $\mathbb{P}^2$. Motivated by Wiegerinck's construction, we produce examples in $\mathbb{P}^2$ to show that this dichotomy is absent irrespective of the choice of holomorphic line bundle on $\mathbb{P}^2$. 

The paper is structured as follows. In Section~\ref{S:prelim}, we detail the required background material and notation on Bergman spaces, potential theory, and holomorphic line bundles on $\p$. The first four subsections of Section~\ref{S:proofs} contain the proofs of the equivalences of Theorem~\ref{T:main}. This is followed by Subsection~\ref{SS:Le}, where the results on the dimension of weighted Bergman spaces for open sets in $\mathbb{C}$ are given.  In Section \ref{S:counter}, we show by example that for each holomorphic line bundle on $\mathbb{P}^2$, there is a domain in $\mathbb{P}^2$ such that the Bergman space of the corresponding holomorphic sections does not equal the space of global holomorphic sections but is finite dimensional.

\noindent {\bf Acknowledgements.}  The authors are grateful to Peter Pflug for his helpful comments on Lemma~\ref{L:removable}.
%%%%%%%%%%%%%%%%%%%%%%%%%%%%%%%%%%%%%%%%%%%%%%%%%%%%%%%%%%%%%%
%%%%%%%%%%%%%%%%%%%%%%%%%%%%%%%%%%%%%%%%%%%%%%%%%%%%%%%%%%%%%%

\section{Background and preliminaries}\label{S:prelim}

%%%%%%%%%%%%%%%%%%%%%%%%%%%%%%%%%%%%%%%%%%%%%%%%%%%%%%%%%%%%%%

\subsection{Bergman spaces in $\Cn$ and potential theory in $\mathbb{C}$} Let $\Om\subseteq \Cn$ be an open subset, and $\lambda$ be the standard volume form on $\C$. For a positive function $\phi$ on $\Om$, the {\em weighted Bergman space} of $\Om$ with weight $\phi$ is the Banach space
	\bes
		A^2_\phi(\Om):=\left\{f\in\hol(\Om):||f||:=\left(\int_D|f(z)|^2\phi(z)\lambda(z)
	\right)^{1/2}<\infty\right\}.
	\ees
For $\phi\equiv 1$, the space $A^2_\phi(\Om)$ is denoted by $A^2(\Om)$, and referred to as the Bergman space of $\Om$. 

A set $K\subset\C$ is said to be {\em polar} if there is a nonconstant subharmonic function $s$ on $\C$ such that $K\subseteq\{z\in\C:s(z)=-\infty\}$. Polar sets admit an alternate characterization via logarithmic potential theory. For a finite Borel measure $\nu$ with compact support in $\C$, its {\em potential} is the function $p_\nu:\C\rightarrow [-\infty,\infty)$ given by 
	\bes
		p_\nu(z)=\int_\C\ln|z-w|\,d\nu(w),\qquad z\in\C.
	\ees
The {\em energy} of $\nu$ is the quantity
	\bes
		I(\nu)=\int_\C p_{\nu}(z)\ d\nu(z).	
	\ees
The {\em (logarithmic) capacity} of $K\subset \C$ is defined as 
	\bes
		\operatorname{cap}(K):=\sup\{e^{I(\nu)}:\nu\ \text{is a Borel probability measure with compact support in}\ K\}.
	\ees
For a compact nonpolar set $K\subset\C$, it is known that there is a unique Borel probability measure $\nu_K$ supported on $K$ such that 
$$I(\nu_K)=\sup\{\nu:\nu\ \text{is a Borel probability measure on}\ K\}.$$
This measure is called the {\em equilibrium measure} of $K$.  
The following result relates the notion of polarity, local polarity, and capacity. 

\begin{theorem}\label{T:polar} Let $K\subset\C$ be a Borel subset. Then the following are equivalent. 
\begin{itemize}
\item [$(i)$] $K$ is polar. 
\item [$(ii)$] There is an open set $G$ containing $K$, and an $s\in\sh(G)$, nonconstant on any component of $G$, such that $K\subset \{x\in G:s(x)=-\infty\}$.
\item [$(iii)$] For every $\zt\in \C$, there is a connected open neighborhood $V_\zt\subseteq\C$ of $\zt$ and a nonconstant function $s_\zt\in\sh(V_\zt)$ such that $K\cap V_\zt\subseteq\{x\in V_\zt:s_\zt(x)=-\infty\}$.
\item [$(iv)$] $\operatorname{cap}(K)=0$.
\end{itemize}
\end{theorem}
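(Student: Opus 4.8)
The core of the statement is the classical equivalence $(i)\Leftrightarrow(iv)$ --- polarity is the same as vanishing logarithmic capacity --- and the plan is to recall it and then derive the ``local'' conditions $(ii)$ and $(iii)$ from it by soft arguments. For $(iv)\Rightarrow(i)$ I would first treat a \emph{compact} set $K$ with $\operatorname{cap}(K)=0$: Evans' theorem produces a Borel probability measure $\mu$ supported on $K$ with $p_\mu\equiv-\infty$ on $K$, and since $\mu$ has compact support, $p_\mu$ is subharmonic on $\C$ and nonconstant (as $p_\mu(z)=\log|z|+o(1)$ when $|z|\to\infty$); thus $K\subseteq\{p_\mu=-\infty\}$ is polar. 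For general Borel $K$ with $\operatorname{cap}(K)=0$, every compact subset has zero capacity, hence is polar, and one passes to $K$ itself by Choquet's capacitability theorem. Conversely, $(i)\Rightarrow(iv)$ is the Frostman/energy principle that a measure of finite logarithmic energy charges no polar set: if $K$ were polar but carried a Borel probability measure $\nu$ with compact support and $I(\nu)>-\infty$, then $\nu$ would assign zero mass to its own support --- impossible; hence $\operatorname{cap}(K)=0$. All of this is standard (see, e.g., the monographs of Ransford or of Armitage and Gardiner), and I would quote it rather than reprove it.

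Granting $(i)\Leftrightarrow(iv)$, the implication $(i)\Rightarrow(ii)$ is trivial: take $G=\C$. For $(i)\Rightarrow(iii)$, fix $\zeta\in\C$ and the globally defined nonconstant subharmonic function $s$ of $(i)$: if $s$ is nonconstant on every neighborhood of $\zeta$, take $V_\zeta$ to be any small disk about $\zeta$ and $s_\zeta:=s|_{V_\zeta}$; if instead $s$ is constant on some neighborhood $W$ of $\zeta$, then $W\cap\{s=-\infty\}=\varnothing$, so $K\cap W=\varnothing$, and we may take $V_\zeta:=W$ and $s_\zeta(z):=\log|z-\zeta|$.

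The one substantive ``soft'' step is the common converse $(iii)\Rightarrow(i)$ (which also yields $(ii)\Rightarrow(i)$): a localization principle. The family $\{V_\zeta\}_{\zeta\in\C}$ from $(iii)$ covers $\C$, and since each disk $\overline{D(0,m)}$ is compact, finitely many of the $V_\zeta$ cover it; hence countably many of them, say $\{V_{\zeta_n}\}_n$, cover $\C$. Fix $n$ and exhaust $V_{\zeta_n}$ by relatively compact open subsets $U_{n,k}$, $\overline{U_{n,k}}\subseteq U_{n,k+1}$. Choose a bounded open $W$ with $\overline{U_{n,k}}\subseteq W$ and $\overline W\subseteq V_{\zeta_n}$. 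Because $s_{\zeta_n}$ is subharmonic and nonconstant on the connected set $V_{\zeta_n}$, it is not identically $-\infty$, so $\mu:=\tfrac1{2\pi}(\Delta s_{\zeta_n})|_{\overline W}$ is a finite positive measure with compact support and $h:=s_{\zeta_n}-p_\mu$ is harmonic, hence finite, on $W$. Therefore $K\cap\overline{U_{n,k}}\subseteq\{s_{\zeta_n}=-\infty\}\cap W=\{p_\mu=-\infty\}\cap W$; if $\mu=0$ this set is empty, and otherwise $p_\mu$ is a nonconstant subharmonic function on $\C$, so $\{p_\mu=-\infty\}$ is polar by definition. Thus each $K\cap\overline{U_{n,k}}$ is polar, and $K=\bigcup_{n,k}(K\cap\overline{U_{n,k}})$ is polar, being a countable union of polar sets. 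The same computation, with the neighborhood $G$ in place of $\bigcup_nV_{\zeta_n}$ and without the preliminary covering, gives $(ii)\Rightarrow(i)$.

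The only genuinely non-elementary ingredient is the classical $(i)\Leftrightarrow(iv)$, which I would cite; everything else relies on the Riesz decomposition of subharmonic functions on relatively compact subdomains, the $\sigma$-compactness of $\C$, and the closure of the class of polar sets under taking subsets and countable unions. The closest thing to an obstacle is bookkeeping: one must handle the base points in $(i)\Rightarrow(iii)$ at which $s$ is locally constant (where $\log|z-\zeta|$ is substituted), and throughout the localization one must stay on \emph{relatively compact} pieces of the $V_\zeta$ so that the Riesz representation applies and produces a globally defined potential whose $-\infty$ set is, by definition, polar.
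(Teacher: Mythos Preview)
The paper does not actually prove this theorem: immediately after stating it, the authors simply cite Conway's book, pointing to Proposition~5.5, Lemma~5.6 and Theorem~7.5 in Chapter~21 of \cite{Co95} for the equivalences $(i)\Leftrightarrow(ii)$, $(i)\Leftrightarrow(iii)$, and $(i)\Leftrightarrow(iv)$, respectively. Your proposal therefore goes well beyond what the paper does, and your sketch is sound: the reduction of $(i)\Leftrightarrow(iv)$ to standard references is exactly what the paper does too, the derivation of $(ii)$ and $(iii)$ from $(i)$ is correct (including your handling of the locally-constant case via the $L^1_{\mathrm{loc}}$ property of nonconstant subharmonic functions), and your localization argument for $(iii)\Rightarrow(i)$ via Riesz decomposition on relatively compact pieces, followed by the countable-union stability of polar sets, is the standard route. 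In short, your approach is correct and strictly more informative than the paper's, which treats the result as a black box from the literature.
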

For the equivalences of $(i)$ and $(ii)$, $(i)$ and $(iii)$, and $(i)$ and $(iv)$, see for instance Proposition~$5.5$, Lemma~$5.6$ and Theorem~$7.5$, respectively, in \cite[Ch.~21]{Co95}.

%%%%%%%%%%%%%%%%%%%%%%%%%%%%%%%%%%%%%%%%%%%%%%%%%%%%%%%%%%%%%%

\subsection{Bergman spaces of holomorphic sections}\label{SS:BSforSections}
 Let $M$ be a complex manifold. Given a holomorphic line bundle $\Pi:L\rightarrow M$ and an open set $D\subset M$,
the space of holomorphic sections of $L|_D$ is denoted by $\Gamma(D;L)$. 

Let $\om$ be a volume form on $M$. Given a holomorphic line bundle $\Pi:L\rightarrow M$, and a smooth Hermitian metric $h$ on $L$, 
we define the Bergman space of sections of $(L|_D,h)$ as
	\bes
		A^2_{h,\om}(D;L)=\left\{s\in\Gamma(D;L):||s||:=\left(\int_D h(s,s)\,\omega\right)^{1/2}<\infty\right\}.
	\ees
$A^2_{h,\om}(D;L)$ is a reproducing kernel Hilbert space. %when equipped with the norm $||\cdot||$. 
If $M=\Cn$, then $\Gamma(D;L)=\hol(D)$ and $h(s,s)\omega=|s|^2\phi\lambda$, for some positive function $\phi$ on $\Cn$. In this case, $A^2_{h,\om}(D;L)$ is the weighted Bergman space $A^2_\phi(D)$. 

\begin{lemma}\label{L:choice} Let $M, D, L, h,\om$ be as above. Suppose that $M$ is compact. Then, as a vector space, $A^2_{h,\om}(D;L)$ is independent of the choices of $h$ and $\omega$. In particular, $A^2_{h,\omega}(M;L)=\Gamma(M;L)$.
\end{lemma}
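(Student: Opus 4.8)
The plan is to reduce everything to the elementary fact that, on a compact manifold, a positive continuous function is bounded above and below by positive constants. Concretely, I would show that replacing $(h,\omega)$ by another pair $(h',\omega')$ only rescales the defining integral by a bounded factor, so the finiteness condition — and hence the underlying set of holomorphic sections — does not change.

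First I would record that any two smooth Hermitian metrics $h_1,h_2$ on $L$ differ by a global positive smooth function: in a local holomorphic frame $e$ the metrics are represented by positive smooth functions $a_1,a_2$ with $h_j(\xi e,\xi e)=|\xi|^2 a_j$, and under a change of frame $e'=ge$ both $a_j$ are multiplied by $|g|^2$, so the ratio $\rho:=a_2/a_1$ is frame-independent and glues to a function $\rho\in\mathcal{C}^\infty(M)$, $\rho>0$, with $h_2(s,s)=\rho\cdot h_1(s,s)$ pointwise for every section $s$. Likewise, any two volume forms satisfy $\omega_2=\sigma\,\omega_1$ for a positive $\sigma\in\mathcal{C}^\infty(M)$. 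Since $M$ is compact, there are constants $0<c\le C<\infty$ with $c\le\rho\sigma\le C$ on $M$, hence on $D$, so that for every $s\in\Gamma(D;L)$,
\[
c\int_D h_1(s,s)\,\omega_1 \;\le\; \int_D h_2(s,s)\,\omega_2 \;\le\; C\int_D h_1(s,s)\,\omega_1 .
\]
Thus one integral is finite if and only if the other is, so $A^2_{h_1,\omega_1}(D;L)$ and $A^2_{h_2,\omega_2}(D;L)$ contain exactly the same holomorphic sections; the norms are merely equivalent, but the vector spaces coincide.

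For the last assertion, I would take $D=M$: for $s\in\Gamma(M;L)$ the function $x\mapsto h(s,s)(x)$ is continuous on the compact manifold $M$, hence bounded, and $\int_M\omega<\infty$, so $\int_M h(s,s)\,\omega<\infty$ and $s\in A^2_{h,\omega}(M;L)$; the reverse inclusion is immediate from the definition, giving $A^2_{h,\omega}(M;L)=\Gamma(M;L)$. I do not anticipate any genuine obstacle here; the only point needing a routine verification is that the pointwise ratio of two Hermitian metrics (respectively, two volume forms) is a well-defined global positive smooth function, after which compactness of $M$ finishes the argument, and crucially the comparison constants $c,C$ are uniform over all of $D$ even when $D$ is not relatively compact, since they come from the global functions $\rho,\sigma$ on $M$.
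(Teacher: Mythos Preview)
Your proof is correct and follows essentially the same approach as the paper: both arguments observe that two Hermitian metrics (respectively, two volume forms) differ by a globally defined smooth positive function on $M$, and then use compactness of $M$ to conclude that the resulting norms are equivalent. Your write-up is simply more detailed in spelling out why the ratio $\rho$ is well defined and why compactness gives uniform bounds, but the underlying idea is identical.
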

\begin{proof} Given any two smooth volume forms, $\omega$ and $\omega'$, on $M$, and any two smooth Hermitian metrics, $h$ and $h'$, on $L$, there exist smooth positive functions, $f$ and $g$, on $M$, such that $\om=f\om'$ and $h=gh'$. Thus, the Bergman spaces of the sections of $L|_D$ are isomorphic for different choices of the volume form on $M$ and Hermitian metric on~$L$.

The final claim holds since $h(s,s)$ is a continuous positive function on $M$ for any choice of $h$ and $s$. 
\end{proof}

\begin{remark} In this paper, $M$ is either $\mathbb P^1$ or $\mathbb P^2$. Thus, in view of Lemma~\ref{L:choice}, $A^2_{h,\om}(D;L)$ is simply denoted by $A^2(D;L)$. 
\end{remark}
%%%%%%%%%%%%%%%%%%%%%%%%%%%%%%%%%%%%%%%%%%%%%%%%%%%%%%%%%%%%%%

\subsection{Line bundles on $\p$} We recall certain standard facts about $\p$. Given any $q\in\p$, one may choose coordinates $\zt=[\zt_0:\zt_1]$ on $\p$ so that $q=[0:1]$. In this case, we often identify $U^z=\p\setminus\{q\}$ with the complex plane $\C$ via
	\bes
		\varphi_z:\zt=[\zt_0:\zt_1]\mapsto z=
							\frac{\zt_0}{\zt_1},
	\ees
and refer to $z$ as the local coordinate on the affine chart $\p\setminus \{q\}$. On occasion, the chart $(U^{1/z},\varphi_{1/z})$ given by $\varphi_{1/z}:\zt\mapsto \zt_1/\zt_0$ on $U^{1/z}=\p\setminus\{[1:0]\}$ will also be used. In this case, we will use the coordinate $w$ on $\varphi_{1/z}(U^{1/z})=\C$.

A smooth volume form $\om$ on $\p$ is of the form
		$f\,\om_{\text{FS}}$, 
where $\om_{\text{FS}}$ is the Fubini--Study volume form, and $f$ is a smooth positive function on $\p$. Thus, in the local coordinate $z$, 
	\bes
		\om(z)=f(z)\,\dfrac{idz\wedge d\zbar}{2(1+|z|^2)^2},\quad z\in\C,
	\ees
where $f:\C\rightarrow(0,\infty)$ is smooth, and $f\circ\varphi_z$ admits a smooth positive extension to $\p$. Given a smooth function $\psi$ on an open set in $\mathbb P^1$, $\partial\overline\partial \psi$ is the unique smooth $(1,1)$-form on $\p$ satisfying $(\varphi_z)_*\left(\partial\overline\partial \psi\right)(z)=\frac{\partial^2(\psi\circ\varphi_z)}{\partial z\partial\zbar}(z)\, d\zbar\wedge dz$ for all $z\in U^z$. 	 

Next, any holomorphic line bundle $\Pi:L\rightarrow\p$ is of the form $\hol(k)$, for some $k\in\Z$, where $\hol(k)$ is the line bundle associated to the divisor $k\{p\}$ for any fixed $p\in\p$. 
In the local coordinate $z$, a global section of $\hol(k)$ is given by a pair, $s=(s_1,s_2)$, of holomorphic functions on $\C$ such that $s_1(z)=z^{k}s_2(1/z)$ if $z\in\C^*$. Similarly, a Hermitian metric on $L=\hol(k)$ is given by a pair, $h=(h_1,h_2)$, of smooth positive functions on $\C$, such that $h_1(z)=|z|^{-2k}h_2(1/z)$ if $z\in\C^*$. Owing to the relationship between $h_1$ and $h_2$, $\partial\overline{\partial} h_1(z)=\partial\overline{\partial} h_2(1/z)$, $z\in\C^*$, where $\partial\overline\partial f(z)=\frac{\partial^2f}{\partial z\partial\zbar}(z)\, d\zbar\wedge dz$ for any $\cont^2$-smooth function $f$ on $\C$. Thus, there is a smooth $(1,1)$-form $\Theta_h$ on $\p$ such that $(\varphi_z)_*\Theta_h=\partial\overline{\partial} h_1$ and $\left(\varphi_{1/z}\right)_*\Theta_h=\partial\overline{\partial} h_2$. The form $\Theta_h$ is referred to as the curvature of the Hermitian metric $h$. Note that it suffices to specify $s_1$, $h_1$ and $\partial\overline\partial h_1$, which is the convention we will employ. Lastly, recall the following description of the space of global sections of $\hol(k)$, $k\in\Z$.  

\begin{lemma}\label{L:Bergman} Let $k\in\Z$. In the local coordinate $z$, 
	\bes
		A^2(\p;\hol(k))=\Gamma(\p;\hol(k))=\begin{cases}
				\{\text{polynomials of degree at most}\ k\},& \text{if}\ k\geq 0,\\
				\{f\equiv 0\},& \text{if}\ k< 0.	
				\end{cases}
	\ees
In particular, $\dim A^2(\p;\hol(k))=\dim\Gamma(\p;\hol(k))=\max\{0,k+1\}$. 
\end{lemma}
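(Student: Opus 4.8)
The plan is to first dispose of the equality $A^2(\p;\hol(k))=\Gamma(\p;\hol(k))$, which is immediate from Lemma~\ref{L:choice} applied with $D=M=\p$. The remaining content is the explicit identification of the space of global holomorphic sections of $\hol(k)$ together with its dimension.

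To carry this out I would work in the two affine charts $U^z$ and $U^{1/z}$ with coordinates $z$ and $w=1/z$, using the convention recalled above: a global section of $\hol(k)$ is a pair $s=(s_1,s_2)$ of entire functions on $\C$ satisfying $s_1(z)=z^k s_2(1/z)$ for $z\in\C^*$. Expanding $s_1$ in its Taylor series $s_1(z)=\sum_{n\ge 0}a_nz^n$ about the origin, the compatibility relation gives, after the substitution $w=1/z$, the Laurent expansion $s_2(w)=\sum_{n\ge 0}a_nw^{k-n}$ on $\C^*$. Since $s_2$ is required to be holomorphic on all of $\C$, and in particular near $w=0$, its Laurent expansion there has no negative powers of $w$; hence $a_n=0$ whenever $n>k$. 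If $k<0$ this forces $s_1\equiv 0$, while if $k\ge 0$ it shows $s_1$ is a polynomial of degree at most $k$. Conversely, for $k\ge 0$ any polynomial $s_1(z)=\sum_{n=0}^{k}a_nz^n$ yields a global section upon setting $s_2(w)=\sum_{n=0}^{k}a_nw^{k-n}$, which is again a polynomial, hence entire, and satisfies the compatibility relation. This identifies $\Gamma(\p;\hol(k))$ with the polynomials of degree at most $k$ when $k\ge 0$ and with $\{0\}$ when $k<0$, and the dimension count $\max\{0,k+1\}$ follows by counting the monomials $1,z,\dots,z^k$.

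I do not expect a serious obstacle here; the only point requiring care is the bookkeeping at the excluded point $[1:0]$, i.e. at $w=0$. It is precisely the requirement that the section be holomorphic (rather than merely meromorphic) there that converts the a priori arbitrary entire function $s_1$ into a polynomial with the sharp degree bound, and that kills all sections when $k<0$. Everything else is a routine manipulation of Taylor and Laurent series; alternatively, the result can simply be quoted as the classical computation of $h^0(\p,\hol(k))$.
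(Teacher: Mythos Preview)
Your proof is correct and complete. It differs from the paper's treatment, though: the paper does not give the classical transition-function argument you carry out, but instead cites the result as standard (referring to Varolin) and, for $k\ge -2$, indicates an alternative derivation in the remarks before Corollary~\ref{C:LeThesisminusK}. There the authors identify $A^2(\p;\hol(k))$ with the weighted Bergman space $A^2_{\phi_k}(\C)$, observe that for $k\ge -2$ the weight $\phi_k$ is of the form $e^{-\psi}$ with $\psi$ subharmonic, compute the total mass of the continuous part of the Riesz measure of $\psi$ to be $4\pi(k+2)$, and then invoke the Borichev--Le--Youssfi dimension formula \eqref{E:Le} to obtain $\dim A^2_{\phi_k}(\C)=\max\{0,k+1\}$. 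Your approach is more elementary, self-contained, and handles all $k\in\Z$ uniformly; the paper's detour, by contrast, illustrates how the general weighted-Bergman-space machinery developed later in the paper reproduces this classical count, at the cost of relying on a much deeper external result and only covering $k\ge -2$.
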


While Lemma~\ref{L:Bergman} is a standard result, see \cite[p. 81]{Varolin11}, we direct the reader to the remarks before Corollary~\ref{C:LeThesisminusK} for a proof in the case $k\geq -2$. 
%%%%%%%%%%%%%%%%%%%%%%%%%%%%%%%%%%%%%%%%%%%%%%%%%%%%%%%%%%%%%%

\subsection{Potential theory on $\p$} Given an open set $U\subseteq\p$, and an upper semicontinuous function $s:U\rightarrow [-\infty, \infty)$ with $s\nequiv -\infty$, $s$ is said to be {\em subharmonic on $U$} if, for every coordinate chart $(V,\varphi)$ that intersects $U$, $s\circ\varphi^{-1}$ is subharmonic on $\varphi(U\cap V)\subset \C$. The set of all subharmonic functions on $U$ is denoted by $\sh(U)$.

\begin{definition}\label{D:polarP} Given a set $K\subset\p$, $K$ is said to be {\em polar} if for each $\zt\in\p$, there is an open neighborhood $V_\zt\subset\p$ of $\zt$ and a function $s_\zt\in\sh(V_\zt)$ such that $K\cap V_\zt\subset\{x\in V_\zt:s_\zt(x)=-\infty\}$.
\end{definition}

\begin{lemma}\label{L:polar1} Let $K\subset\p$ be compact. For any fixed $q=[0:1]\in\p$, $K$ is polar in $\p$ if and only if $K_*:=\varphi_z(K)$ is polar in $\C$. 
\end{lemma}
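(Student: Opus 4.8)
The plan is to reduce the statement about polarity on $\p$ to polarity in $\C$ by carefully analyzing how subharmonicity transforms under the two affine charts, paying attention only to the behavior near the point $q=[0:1]$ that is "lost" in the chart $\varphi_z$. First I would prove the forward direction: if $K$ is polar in $\p$, then for the point $q$ itself there is a neighborhood $V_q$ and $s_q\in\sh(V_q)$ with $K\cap V_q\subset\{s_q=-\infty\}$; transporting via $\varphi_z$, the set $\varphi_z(K\cap(V_q\setminus\{q\}))$ lies in the $-\infty$ locus of a subharmonic function on a neighborhood of $\infty$ in $\C$. For every other point $\zt\ne q$ of $\p$, the chart $\varphi_z$ is a biholomorphism near $\zt$, so the local subharmonic functions witnessing polarity of $K$ push forward directly to local subharmonic functions in $\C$ witnessing polarity of $K_*$ near $\varphi_z(\zt)$. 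Invoking the equivalence $(i)\Leftrightarrow(iii)$ of Theorem~\ref{T:polar}, local polarity of $K_*$ at every point of $\C$ — which I have just verified — gives that $K_*$ is polar in $\C$. (Here I use that $K$ compact and $K\subsetneq\p$ forces $q\notin K$ after a suitable choice, or more precisely that polarity is unaffected by removing the single point $q$, since points are polar and a countable union — here a union of two — of polar sets is polar.)

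For the converse, suppose $K_*=\varphi_z(K)$ is polar in $\C$. By Theorem~\ref{T:polar}, $(i)\Leftrightarrow(ii)$, there is an open $G\supset K_*$ and $s\in\sh(G)$, nonconstant on each component, with $K_*\subset\{s=-\infty\}$; since $K$ is compact, $K_*$ is compact and I may take $G$ to be a bounded neighborhood of $K_*$. Then $s\circ\varphi_z$ is subharmonic on $\varphi_z^{-1}(G)\subset\p\setminus\{q\}$, an open neighborhood in $\p$ of every point of $K$ provided $q\notin K$. If $q\notin K$ this already gives local subharmonic witnesses at every point of $K$ (at points $\ne q$ we localize $s\circ\varphi_z$; the point $q$ is simply not in $K$), hence $K$ is polar by Definition~\ref{D:polarP}. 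If $q\in K$, I argue separately at $q$: since $q$ is polar in $\p$ (a single point), I can take $s_q(x)=\log|\varphi_{1/z}(x)|$ near $q$, which is subharmonic on $U^{1/z}$ and equals $-\infty$ exactly at $q$; this handles the local condition at $q$, while the chart $\varphi_z$ handles all other points of $K$.

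The main obstacle is bookkeeping around the point $q$: the chart $\varphi_z$ sends $q$ to $\infty\notin\C$, so a subharmonic function on $\p$ near $q$ does not directly become a subharmonic function on an open subset of $\C$, and conversely a global-type subharmonic function on $\C$ does not control behavior at $q$. The clean way around this is to note that $\{q\}$ is polar on both sides and that polarity — by the local characterizations in Theorem~\ref{T:polar} and Definition~\ref{D:polarP} — is a purely local, chart-by-chart property, so one never needs a single global subharmonic function; it suffices to exhibit a local witness at each point, using the chart $\varphi_z$ away from $q$ and the chart $\varphi_{1/z}$ (or the triviality of point-polarity) at $q$. Once this localization is set up, each verification is a one-line check that subharmonicity is preserved under the biholomorphic change of coordinates.
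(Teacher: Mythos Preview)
Your forward direction is correct and matches the paper's: for each $\zeta_*\in\C$ you pull back to $\zeta=\varphi_z^{-1}(\zeta_*)\ne q$, push forward the local subharmonic witness via $\varphi_z$, and invoke Theorem~\ref{T:polar}$(iii)$. (The separate discussion of $q$ in this direction is unnecessary, since no point of $\C$ corresponds to $q$.)

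The converse has a genuine gap. You assert that ``since $K$ is compact, $K_*$ is compact''; this fails whenever $q$ is a non-isolated point of $K$, because $\varphi_z$ sends punctured neighborhoods of $q$ to neighborhoods of $\infty$, so $K_*=\varphi_z(K\setminus\{q\})$ is then unbounded. More seriously, your proposed witness at $q$, namely $s_q=\log|\varphi_{1/z}|$, has $\{s_q=-\infty\}=\{q\}$. Definition~\ref{D:polarP} requires $K\cap V_q\subset\{s_q=-\infty\}$ for \emph{some} neighborhood $V_q$ of $q$; if $K$ accumulates at $q$, every such $V_q$ contains points of $K\setminus\{q\}$, and your $s_q$ does not send them to $-\infty$. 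Saying ``the chart $\varphi_z$ handles all other points of $K$'' does not help: the local condition \emph{at} $q$ must, on its own, absorb all of $K\cap V_q$. Since the paper's applications (e.g.\ the proof of $(a)\Rightarrow(b)$) choose $q\in K$ with $K$ an arbitrary compact polar set, this case cannot be avoided.

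The paper closes the gap by passing to the second chart. One notes that $K_{**}:=\varphi_{1/z}\circ\varphi_z^{-1}(K_*)$ is polar in $\C^*$ (biholomorphisms preserve subharmonicity), hence polar in $\C$ by Theorem~\ref{T:polar}; since $\{0\}$ is also polar and the union of two polar sets is polar, $\varphi_{1/z}(K)\subseteq K_{**}\cup\{0\}$ is polar in $\C$. This produces a subharmonic $s$ on all of $\C$ with $\varphi_{1/z}(K)\subset\{s=-\infty\}$, and then $s_q:=s\circ\varphi_{1/z}$ on $V_q:=U^{1/z}$ is the required witness at $q$ --- one that captures all of $K$ near $q$, not merely the single point.
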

\begin{proof} First, suppose that $K$ is polar. Let $\zt_*\in\C$ and $\zt=\varphi_z^{-1}(\zt_*)$. Then it is clear that
	\bes
		K_*\cap U_{\zt_*}\subset\{x\in U_{\zt_*}:s_{\zt_*\,}(x)=-\infty\},
	\ees
where $U_{\zt_*}=\varphi_z(V_\zt\setminus\{q\})$, and $s_{\zt_*}=s_\zt\circ\varphi_z^{-1}$. Thus, $K_*$ is polar in $\C$. 

Next, assume that $K_*$ is polar. By a similar reasoning as above, we obtain that for every $\zt\in\p\setminus\{q\}$, there exist $V_\zt\subset\p$ and $s_\zt\in\sh(V_\zt)$ that satisfy the condition in Definition~\ref{D:polarP}. It remains to produce such a pair $(V_\zt,s_\zt)$ for $\zt=q$. Since subharmonicity is preserved by biholomorphic maps, it follows that the set $K_{**}:=\varphi_{1/z}\circ \varphi_z^{-1}(K_*)$ is polar in $\C^*$. In fact, by Theorem~\ref{T:polar}, $K_{**}$ is polar in $\C$. On the other hand, $\{0\}=\varphi_{1/z}(\{q\})$ is also polar in $\C$. Now, using the fact that the union of two polar sets in $\C$ is polar in $\C$, see \cite[Lemma~5.6, Ch.~21]{Co95}, $\varphi_{1/z}(K)=K_{**}\cup\{0\}$ is polar in $\C$, i.e., there is an $s\in\sh(\C)$ such that $\varphi_{1/z}(K)\subset\{x\in\C:s(x)=-\infty\}$. To complete the proof, we simply observe that $V_q=\varphi_{1/z}^{-1}(\C)$ and $s_q=s\circ\varphi_{1/z}$ satisfy the condition required at $q\in K$ in Definition~\ref{D:polarP}.  
\end{proof}

%%%%%%%%%%%%%%%%%%%%%%%%%%%%%%%%%%%%%%%%%%%%%%%%%%%%%%%%%%%%%%
%%%%%%%%%%%%%%%%%%%%%%%%%%%%%%%%%%%%%%%%%%%%%%%%%%%%%%%%%%%%%%

\section{Proof(s) of the main result}\label{S:proofs}
 By Lemma~\ref{L:Bergman}, $(b)\Rightarrow(c)$. In Subsections~\ref{SS:a=>b}--\ref{SS:(d)=>(a)}, we prove the implications $(a)\Rightarrow(b)$, $(c)\Rightarrow(a)$, $(c)\Rightarrow(d)$, and $(d)\Rightarrow(a)$, respectively. This section concludes with the characterization of weighted Bergman spaces, $A^2_{e^{-\psi}}(\Omega)$, for subharmonic $\psi$ and $\Omega\subset\mathbb{C}$ open.
%%%%%%%%%%%%%%%%%%%%%%%%%%%%%%%%%%%%%%%%%%%%%%%%%%%%%%%%%%%%%%

\subsection{Proof of $\mathbf{(a)}$ implies $\mathbf{(b)}$}\label{SS:a=>b} 
In order to prove that $A^2(\p\setminus K;L)=\Gamma(\p;L)$ whenever $K\subset\p$ is a compact polar set, we first prove a version of this result for sets in $\mathbb{C}$ in Lemma~\ref{L:removable} below. 
A readily available result in the literature is that  $A^{2}(\Om)=A^2(\Om\setminus X)$ for any open set $\Om\subset\C$ and polar, compact set $X\subset \Om$, see for instance Theorem~9.5 in \cite[Ch.~21]{Co95}. 
We present a generalization of this result to the case when $X$ is a polar and relatively closed subset of $\Om$.\footnote{After the first draft of this manuscript appeared on the arXiv, Pflug brought to our attention that Lemma~\ref{L:removable} has appeared in \cite{Si82}, where Siciak attributes it to Sakai and Skwarczy{\' n}ski. Since neither \cite{Si82} nor the source cited therein are easily accessible, we give a proof of Lemma~\ref{L:removable} in this work. This proof is simpler than our original one, and follows an argument suggested by Pflug. This argument resembles that of Siciak's.}

\begin{lemma}[\cite{Si82}]\label{L:removable} 
Let $\Om\subseteq\C$ be an open subset, and $X\subset \C$ be a closed polar subset. Then, $A^2(\Om\setminus X)=A^2(\Om)$, i.e., for any $f\in A^2(\Om\setminus X)$, there is an $F\in A^2(\Om)$ such that $F|_{\Om\setminus X}=f$. 
\end{lemma}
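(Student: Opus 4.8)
The plan is to show that every $f\in A^2(\Om\setminus X)$ extends holomorphically across $X$, and that the extension remains $L^2$ on $\Om$. The second part is automatic once the first is established: the extension $F$ agrees with $f$ off the polar set $X$, and a polar set has zero Lebesgue measure (being, locally, contained in the $-\infty$ locus of a subharmonic function), so $\int_\Om|F|^2\,\lambda=\int_{\Om\setminus X}|f|^2\,\lambda<\infty$. Hence the whole problem reduces to a local removable-singularity statement: for each point $p\in X\cap\Om$, I want to find a neighborhood on which $f$ extends holomorphically. Since $X$ is only relatively closed in $\Om$, but polar sets are closed-under-nothing-bad here, I may work in a small disk $\DD(p,r)\Subset\Om$ and treat $X\cap\overline{\DD(p,r)}$; shrinking $r$ I can take $X$ compact inside a slightly larger disk, which is the situation handled by the classical result (Theorem~9.5 in \cite[Ch.~21]{Co95}). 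The subtlety the footnote flags — that one cannot invoke the classical statement verbatim because $X$ need not be compact globally — evaporates locally, because any closed set intersected with a compact neighborhood is compact.

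More concretely, following the argument attributed to Pflug/Siciak: fix $p\in X$ and a disk $V=\DD(p,\rho)\Subset\Om$. The set $X\cap \overline V$ is compact and polar in $\C$, so by Theorem~\ref{T:polar} it has logarithmic capacity zero, and in particular it is removable for $A^2$ of open sets by the classical result applied to the open set $V$: any $g\in A^2(V\setminus X)$ extends to $A^2(V)$. Now $f|_{V\setminus X}$ lies in $A^2(V\setminus X)$ since $\int_{V\setminus X}|f|^2\,\lambda\le\int_{\Om\setminus X}|f|^2\,\lambda<\infty$. Therefore $f$ admits a holomorphic extension $F_V$ on $V$. Doing this for a neighborhood $V_p$ of each $p\in X\cap\Om$, the extensions agree on overlaps (they agree on the dense open set $(V_p\cap V_{p'})\setminus X$, and holomorphic functions agreeing on a set with a limit point in a connected open set agree), and they agree with $f$ on $\Om\setminus X$; gluing produces $F\in\hol(\Om)$ with $F|_{\Om\setminus X}=f$. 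Finally $\|F\|_{A^2(\Om)}=\|f\|_{A^2(\Om\setminus X)}$ because $X$ has measure zero, so $F\in A^2(\Om)$.

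If one prefers a self-contained argument rather than citing the compact case, the key local step is the standard one: on $V\setminus X$ with $X$ compact polar, use that the complement of a polar set in a disk is connected (polar sets are totally disconnected and "thin"), pick an annulus $\{r_1<|z-p|<r_2\}\subset V$ disjoint from $X$, define $F_V$ on $\DD(p,r_2)$ by the Cauchy integral of $f$ over a circle in that annulus; $F_V$ is holomorphic on the full disk, and one checks $F_V=f$ on the annulus and hence, by analytic continuation within the connected set $V\setminus X$, on all of $V\setminus X$. The $L^2$ bound near $X$ is what guarantees the Cauchy integral reproduces $f$ — more precisely, one uses that $f$ has no "mass" on $X$: an $L^2$ holomorphic function on $V\setminus X$ cannot have a genuine singularity on a capacity-zero set, since such a function would have to blow up at least logarithmically, contradicting square-integrability.

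\textbf{Main obstacle.} The only real point requiring care is the passage from the purely local removability (which is classical) to the global statement when $X$ is merely relatively closed in $\Om$ rather than compact: one must ensure the locally defined extensions patch consistently and that no issues arise where $X$ approaches $\partial\Om$. This is handled by never trying to extend across $\partial\Om$ — we only extend across points of $X$ that lie in the interior $\Om$ — and by the identity-theorem compatibility of the local pieces on $\Om\setminus X$, which is connected-enough-in-pieces for the gluing to make sense. A secondary, entirely routine point is verifying that polar $\Rightarrow$ Lebesgue-null, which follows since locally $X\subset\{s=-\infty\}$ for $s$ subharmonic and $\not\equiv-\infty$, and such level sets have planar measure zero.
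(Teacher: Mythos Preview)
Your argument follows essentially the same route as the paper's: reduce to the compact case locally, invoke the classical removability result on each small disk, glue via the identity theorem, and conclude using that polar sets have Lebesgue measure zero. The paper makes one step precise that you gloss over: to apply the compact-case result on a disk $D_a=\D(a,r_a)$ you need $X\cap D_a$ to be \emph{compact in $D_a$}, i.e.\ the boundary circle $\partial D_a$ must miss $X$. Your phrase ``shrinking $r$ I can take $X$ compact inside a slightly larger disk'' does not actually achieve this --- taking $K=X\cap\overline{\D(p,r)}$ inside a larger disk $V'$ leaves $f$ undefined on $(V'\setminus K)\cap X$, so the classical result does not apply directly. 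The paper resolves this by citing the (nontrivial but standard) fact that a closed polar set misses some circle of every sufficiently small radius about each point (Theorem~7.3.9 in \cite{ArGa12}), guaranteeing $\partial\D(a,r_a)\cap X=\emptyset$ and hence $X\cap D_a=X\cap\overline{D_a}$ compact inside $D_a$. You implicitly use the same fact in your self-contained variant (``pick an annulus \ldots\ disjoint from $X$''), so the idea is there; it just needs to be stated and justified rather than absorbed into ``shrinking $r$''.
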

\begin{proof} The proof uses the fact that for any $a\in \Om\cap X$, there exists an $r_a>0$ such that $\overline{\D(a,r_a)}\subset\Om$, and $b\D(a,r_a)\cap X=\emptyset$, see Theorem 7.3.9 in \cite{ArGa12}.

Now, let $D_a:=\D(a,r_a)$ and $X_a:=\D(a,r_a)\cap X$, where $r_a$ is as above. Then, $X_a$ is a compact subset of $D_a$. Using the result for compact, polar sets mentioned before the statement of this lemma, for any $f\in A^2(\Om)$, there is an $F_a\in A^2(D_a)$ such that $F_a|_{D_a\setminus X_a}=f|_{D_a\setminus X_a}$. For any $a,b\in X$ such that $D_a\cap D_b\neq \emptyset$, we have that $(D_a\cap D_b)\setminus X$ is a nonempty open set because $X$ is a closed and polar set. Thus, it follows from the identity theorem that $F_a=F_b$ on $D_a\cap D_b$. Hence, the following function is well-defined and holomorphic on $\Om$:
	\bes
		F(z)=\begin{cases}
				F_a(z),&\ z\in D_a,\\
					f(z),&\ z\in\Om\setminus X.
			\end{cases}
	\ees
 Moreover, since $f$ and $F$ differ only on the zero measure set $X$, it follows that $||F||_{A^2(\Om)}=||f||_{A^2(\Om\setminus X)}$.
\end{proof}

We are now set to prove that $(a)$ implies $(b)$ in Theorem~\ref{T:main}. 

\begin{proof}[Proof of $(a)\Rightarrow(b)$ in Theorem~\ref{T:main}]
Suppose that $K\subsetneq \p$ is polar. If $K$ is empty, then $(b)$ follows from $(a)$, see Lemma~\ref{L:Bergman}. Thus, we assume that $K$ is a nonempty polar set, and fix a $q\in K$. With $q=[0:1]$, we fix the Hermitian metric on $L=\hol(k)$ as $h_1(z)={1}/(1+|z|^2)^k$ in the local coordinate $z$. To account for the volume form in the local coordinate $z$, we set
\be\label{E:metric}
		\phi_k(z):=\frac{1}{(1+|z|^2)^{k+2}},\qquad z\in\C. 
\ee
Setting $D_*:=\C\setminus K_*$, it then follows that  $A^2(D;\hol(k))$ is isomorphic to $A^2_{\phi_k}(D_*)$.

 Next, we show that $A^2_{\phi_k}(D_*)=A^2_{\phi_k}(\C)$. For this, let $f\in A^2_{\phi_k}(D_*)$. For any $R>0$, set $D_*(R)=D_*\cap \D(0;R)$ and $f_R=f\cdot\chi_{_{D_*(R)}}$. Then, since 
	\bes
		\phi_k(z)\geq \begin{cases}	
			 1,& \text{if}\ k<-2,\\
			(1+R^2)^{-k-2},& \text{if}\ k\geq -2,
		\end{cases}\qquad z\in \D(0;R),
	\ees
it follows that $f_R\in A^2(D_*(R))$. However, $K_*$ is a closed polar subset of $\C$. 
Thus, by Lemma~\ref{L:removable}, there exists an $F_R\in A^2(\D(0;R))$ such that $F_R|_{D_*(R)}=f_R$. We abuse notation, and assume that $F_R$ is defined on $\C$ by setting it to be $0$ on 
$\C\setminus \D(0;R)$.  Since $$F_S|_{D_*(R)}=F_{R}|_{D_*(R)}=f|_{D_*(R)}\;\;\text{ whenever }\;\;S\geq R>0,$$
it follows from the identity theorem that $F_S|_{\D(0;R)}=F_R|_{\D(0;R)}$ for all $S\geq R>0$. 
Thus, the sequence $\{F_N\}_{n\in\N}$ admits a pointwise limit, say $F$, on $\C$. Moreover, since $F|_{\D(0;N)}=F_N$ for all $N\in\N$, $F\in\hol(\C)$ and $F|_{D_*}=f$. In particular, $F$ is measurable so that
\begin{align*}
  \|F\|_{L^2_{\phi_k}(\mathbb{C})}=\|F\|_{L^2_{\phi_k}(D_{*})}=\|f\|_{L^2_{\phi_k}(D_{*})}<\infty
\end{align*}
follows. Therefore, $A^2_{\phi_k}(D_*)$ is isomorphic to $A^2_{\phi_k}(\mathbb{C})$.  Finally, note that
$A^2_{\phi_k}(\C)$ is isomorphic to $A^2(\p;\hol(k))$, which is $\Gamma(\p;\hol(k))$, by Lemma~\ref{L:Bergman}. Thus, $(a)$ implies $(b)$ in Theorem~\ref{T:main}.
\end{proof}

%%%%%%%%%%%%%%%%%%%%%%%%%%%%%%%%%%%%%%%%%%%%%%%%%%%%%%%%%%%%%%

\subsection{Proof of $\mathbf{(c)}$ implies $\mathbf{(a)}$ via the Cauchy transform approach}\label{SS:CT}

\begin{proof}
We prove the implication by contraposition, i.e., we prove that if $K\subsetneq\p$ is nonpolar, then $A^2(\p\setminus K;\hol(k))$ is infinite dimensional. As in the proof of $(a)\Rightarrow(b)$ in Subsection~\ref{SS:a=>b}, we assume that $q=[0:1]\in K$ and fix the Hermitian metric $h_1(z)=1/(1+|z|^2)^k$ on $\hol(k)$. Then, it suffices to show that $A^2_{\phi_k}(D_*)$ is infinite dimensional, where $\phi_k$ is as in \eqref{E:metric}, and $D_*=\C\setminus K_*$.

\noindent {\bf Case 1.} Suppose that $k\geq -2$. Then, since $\phi_k(z)\leq 1$ for all $z\in \C$, $A^2(D_*)\subseteq A^2_{\phi_k}(D_*)$. Since, by Lemma~\ref{L:polar1}, $\C\setminus D_*=K_*$ is nonpolar, Theorem~\ref{T:WC} yields that $A^2(D_*)$, and therefore, $A^2_{\phi_k}(D_*)$ is infinite dimensional. 

\noindent {\bf Case 2.} Suppose that $k<-2$. In this case,  $A^2_{\phi_k}(D_*)\subseteq A^2(D_*)$, so Theorem~\ref{T:WC},  do not directly yield our claim. However, we use the techniques of Carleson and Wiegerinck to produce infinitely many independent functions in $A^2_{\phi_k}(D_*)$. 

First, we recall Carleson's construction of a nontrivial function in $A^2(D_*)$; see \cite[Theorem~9.5, Ch.~21]{Co95} for a detailed exposition. Set $K_*:=\varphi_z(K)=\C\setminus D_*$. Then, $K_*$ is a Borel nonpolar set in $\C$. Thus, it contains a compact set, say $E$, with positive logarithmic capacity, see for instance \cite[Theorem 7.5, Ch.~21]{Co95}. Let $E_1,E_2$ be disjoint compact subsets of $E$, each of which has positive logarithmic capacity. For $j\in\{1,2\}$, let $\mu_j$ be the equilibrium measure of $E_j$, and set $\mu=\mu_1-\mu_2$. Set $f$ to be the Cauchy transform of $\mu$, i.e., 
	\begin{align}\label{E:Cauchytransform}
		f(z)=\int_E\frac{d\mu(\xi)}{\xi-z}.	
	\end{align}
Then, $f$ is analytic on $\C_\infty\setminus E$, with $f(\infty)=f'(\infty)=0$. Carleson further shows that  $f\in A^2(\C\setminus E)$ which is contained in $A^2(D_*)$. 

We next use Wiegerinck's technique to produce a sequence of linearly independent functions $\{g_j\}_{j\in\mathbb{N}}\subset A^2(\C\setminus E)$ such that $g_j$ vanishes at $\infty$, and its order of vanishing at $\infty$ is at least $j$. Assuming the existence of this sequence for the moment, we claim that $g_j\in A^2_{\phi_k	}(D_*)$ for all $j\geq -k$. To see this, fix a $j\geq -k$, and note that there exist $\{c_{j,\ell}\}_{\ell\geq j}\subset\C$ and $R>0$ so that
	\bes
		g^j(z)=\sum_{\ell=j}^\infty c_{\ell,j}\, z^{-\ell}
			\qquad \text{for}\ |z|>R.
	\ees
 Thus, since $j\geq -k$ and $k<-2$, it follows that   
	\begin{align*}
		\int_{D_*}|g_j(z)|^2&\frac{idz\wedge d\zbar}{2(1+|z|^2)^{k+2}}\\
		&\leq 
		(1+R^2)^{-2-k}\int_{D(0;R)\setminus E}|g_j(z)|^2\,\lambda(z)
			+\int_{|z|>R}|g_j(z)|^2\frac{idz\wedge d\zbar}{2(1+|z|^2)^{k+2}}\\
		&\leq	||g_j||^2_{A^2(\C\setminus E)}+2\pi \int_{r>R}\sum_{\ell=j}|c_{\ell,j}|^2r^{-2\ell} r(1+r^2)^{-2-k} dr<\infty. 
	\end{align*}
This gives the infinite dimensionality of $A^2_{\phi_k}(D_*)$. It remains to construct the sequence $\{g_j\}_{j\in\N}$.

\noindent{\em Subcase 1. Suppose that $f$ in \eqref{E:Cauchytransform} is rational.} Since $f$ is $L^2$-integrable on $\C\setminus E$, but not on $\C$, $E$ must have positive Lebesgue measure. In this case, the function 
	\bes	
		g(z)=\int_E\frac{\lambda(\xi)}{\xi-z}
	\ees
is bounded and analytic on $\C\setminus E$, with $g(\infty)=0$ and $g'(\infty)=-\lambda(E)$, see \cite[Page 2]{Ga72}. Thus, $g^j\in A^2(\C\setminus E)$ for all $j\in\N$, and each $g^j$ has order of vanishing $j$ at $\infty$. In this case, we set $g_j=g^j$.   

\noindent{\em Subcase 2. Suppose that $f$ in \eqref{E:Cauchytransform} is not rational.} Expanding $f$ in a Laurent series around $\infty$, one gets
	\bes
		f(z)=\sum_{\ell=p}^\infty c_\ell\,z^{-\ell},\qquad \text{ for some }p\geq 2, c_p\neq 0. 
	\ees
Now, we produce a nontrivial function $g\in A^2(\C\setminus E)$ whose Laurent expansion at $\infty$ does not contain any terms in $z^{-1}$,\ldots,$z^{-p}$. Let $z_1,\ldots,z_{p+1}$ be distinct points in $\C\setminus E$, and
	\bes
		g(z)=\sum_{\ell=1}^{p+1}b_\ell\:\frac{(f(z)-f(z_\ell))}{z-z_\ell}.
	\ees
Then, expanding $g$ as a Laurent series around $\infty$, one obtains $g(z)=\sum_{\ell=1}^\infty a_\ell\,z^\ell$, where 
	\be\label{E:coeff}
		a_m=\sum_{\ell=1}^{p+1}-b_\ell f(z_\ell)\,z_\ell^{m-1},\qquad		
			m\in\{1,\ldots,p\}.
	\ee
Here, the constants $b_1,\ldots,b_{p+1}$ are chosen to solve the homogeneous system of $p$ linear equations obtained by setting $a_m=0$, $m\in\{1,\ldots,p\}$. Moreover, $g$ cannot be trivial, else $f$ will be rational. Thus, $g$ has the desired properties. 

Setting $g_1=\cdots =g_p=f$ and $g_{p+1}=g$, we construct $g_j$ inductively for $j\geq p+1$ by repeating the above procedure for $g_{j-1}$ in place of $f$. This completes the construction of the sequence in all cases, and hence, the proof of Theorem~\ref{T:main}. 
\end{proof}

%%%%%%%%%%%%%%%%%%%%%%%%%%%%%%%%%%%%%%%%%%%%%%%%%%%%%%%%%%%%%%

\subsection{Proof of $\mathbf{(c)}$ implies $\mathbf{(d)}$}\label{SS:dbar}
The proof of this implication follows from a similar result for sets in $\mathbb{C}$.

\begin{lemma}\label{L:GaHaHemodification}
  Let $\Omega\subset\mathbb{C}$ be an open set and $A_{e^{-\psi_1}}^2(\Omega)$ be the weighted Bergman space for some $\psi_1\in\mathcal{C}^\infty(\Omega,\mathbb{R})$. Suppose that
  there exists a subharmonic function $\psi_2\in\mathcal{C}^\infty(\Omega,\mathbb{R})$  such that it is bounded above, $\psi_1+\psi_2$ is subharmonic on $\Omega$, and
  $$\triangle\psi_2 >0\;\;\text{on }\overline{U}$$
for an open set $U\Subset\Omega$.  Then $A_{e^{-\psi_1}}^{2}(\Omega)$ is infinite dimensionial.
\end{lemma}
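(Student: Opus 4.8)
The plan is to construct, for each $n\in\N$, a holomorphic function $f_n$ on $\Omega$ belonging to $A^2_{e^{-\psi_1}}(\Omega)$ with $f_n(a_n)=1$ and $f_n(a_j)=0$ for $1\le j<n$, where $a_1,a_2,\dots$ is a fixed sequence of distinct points of $U$. Such a family is automatically linearly independent (evaluating a finite relation $\sum c_j f_j=0$ successively at $a_1,a_2,\dots$ gives $c_1=c_2=\cdots=0$), so it forces $\dim A^2_{e^{-\psi_1}}(\Omega)=\infty$. Fix pairwise disjoint closed discs $\overline{\D(a_j,r_j)}\subset U$ with $r_j\le 1$. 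For a given $n$, let $\chi\in\cont_c^\infty(\D(a_n,r_n))$ be identically $1$ near $a_n$, and for $1\le j\le n$ let $\chi_j\in\cont_c^\infty(\D(a_j,r_j))$ be identically $1$ near $a_j$, with $\supp\chi_n$ shrunk so as to be disjoint from $\supp\dbar\chi$. Then $\dbar\chi$ vanishes near every $a_j$, $1\le j\le n$ (near $a_n$ because $\chi\equiv 1$ there, near $a_j$ for $j<n$ because $\chi\equiv 0$ there), and $\supp\dbar\chi\subset\overline U$ is disjoint from the $a_j$ and from the supports of all the $\chi_j$.

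I will take $f_n=\chi-u$, where $u$ solves $\dbar u=\dbar\chi$ and is produced by H\"ormander's weighted $L^2$-estimate for $\dbar$ on $\Omega$ (every open subset of $\C$ is pseudoconvex) against the weight
\[
  \varphi:=\psi_1+M\psi_2+2\sum_{j=1}^n\chi_j\ln|z-a_j|,
\]
for a large constant $M>0$ to be chosen. The \emph{key} point is the identity $\triangle\big(\chi_j\ln|z-a_j|\big)=2\pi\delta_{a_j}+\eta_j$, where $\eta_j\in\cont_c^\infty(\Omega)$ is supported in the (compact, contained in $U$) set on which $\chi_j$ is not locally constant; hence $\triangle\varphi=\triangle(\psi_1+M\psi_2)+4\pi\sum_j\delta_{a_j}+2\sum_j\eta_j$. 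Since $\psi_2$ and $\psi_1+\psi_2$ are subharmonic,
\[
  \triangle(\psi_1+M\psi_2)=\triangle(\psi_1+\psi_2)+(M-1)\triangle\psi_2\ \ge\ (M-1)\triangle\psi_2,
\]
which is $\ge 0$ on $\Omega$ and $\ge (M-1)c_0$ on $\overline U$, where $c_0:=\min_{\overline U}\triangle\psi_2>0$. Choosing $M$ large enough that $(M-1)c_0>2\,\|\sum_j\eta_j\|_{L^\infty}$ makes $\triangle\varphi\ge 0$ on $\Omega$, so $\varphi$ is subharmonic there; and on a neighbourhood of $\supp\dbar\chi$ — which is disjoint from the $a_j$ and from the supports of all the $\eta_j$ — one has $\varphi=\psi_1+M\psi_2$ and $\triangle\varphi\ge(M-1)c_0>0$.

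H\"ormander's theorem then yields $u$ with $\dbar u=\dbar\chi$ and
\[
  \int_\Omega|u|^2e^{-\varphi}\,d\lambda\ \lesssim\ \int_{\supp\dbar\chi}\frac{|\dbar\chi|^2}{\triangle\varphi}\,e^{-\varphi}\,d\lambda\ <\ \infty,
\]
the last integral being finite because on $\supp\dbar\chi$ one has $\varphi=\psi_1+M\psi_2$ and $\triangle\varphi\ge(M-1)c_0$, so the integrand is a bounded function, while $\supp\dbar\chi$ is a bounded set. By Weyl's lemma $u$ is holomorphic near each $a_j$, and since $e^{-\varphi}\ge c\,|z-a_j|^{-2}$ near $a_j$ (as $\chi_j\equiv 1$ there and the remaining factors of $e^{-\varphi}$ are locally bounded below), finiteness of $\int_{\D(a_j,\rho)}|u|^2|z-a_j|^{-2}\,d\lambda$ forces $u(a_j)=0$ for $1\le j\le n$. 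Set $f_n:=\chi-u$: then $\dbar f_n=0$, so $f_n\in\hol(\Omega)$, and $f_n(a_n)=1$, $f_n(a_j)=0$ for $j<n$. Finally, with $B:=\sup_\Omega\psi_2<\infty$ and using $\prod_j|z-a_j|^{-2\chi_j}\ge 1$ on $\Omega$ (valid since $r_j\le 1$ and $0\le\chi_j\le 1$), we get $e^{-\psi_1}\le e^{MB}e^{-\varphi}$, whence
\[
  \int_\Omega|f_n|^2e^{-\psi_1}\,d\lambda\ \le\ 2\int_\Omega|\chi|^2e^{-\psi_1}\,d\lambda+2e^{MB}\int_\Omega|u|^2e^{-\varphi}\,d\lambda\ <\ \infty .
\]
Thus $f_n\in A^2_{e^{-\psi_1}}(\Omega)$, which completes the construction.

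The main obstacle is to make the single weight $\varphi$ meet four competing demands at once: it must be globally subharmonic (so that H\"ormander's theorem applies), strictly subharmonic with a quantitative lower bound on $\supp\dbar\chi$ (so that the $\dbar$-estimate is finite), logarithmically singular at each $a_j$ (so that the interpolation conditions $u(a_j)=0$ are forced), and bounded below by a constant multiple of $e^{-\psi_1}$ (so that $u$, and hence $f_n$, lands in the correct space). Reconciling the first two demands with the third is what forces $M$ to be taken large only after the cutoffs $\chi_j$, and therefore the error terms $\eta_j$, have been fixed; the hypotheses that $\psi_2$ be bounded above, that $\psi_1+\psi_2$ be subharmonic, and that $\triangle\psi_2>0$ on $\overline U$ enter precisely to secure the fourth, first, and second of these points, respectively. (The logarithmic terms must be cut off, rather than left global, exactly so that the fourth property survives when $\Omega$ is unbounded.)
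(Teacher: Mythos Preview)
Your proof is correct and follows essentially the same approach as the paper's. The paper defers to \cite{GaHaHe17}, noting only that one replaces the weight $K\varphi$ there by $\psi_1+K\psi_2$ and uses the upper bound on $\psi_2$ to land back in $L^2_{e^{-\psi_1}}$; you have written out exactly this H\"ormander-based interpolation argument in full, with the same large-constant mechanism ($M$ in place of $K$) and the same use of the boundedness-above hypothesis.
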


\begin{proof}
  This follows from the  proof of Theorem 1 and a minor modification in the proof of Lemma 7 in \cite{GaHaHe17}. This modification consists of replacing $K\varphi(z)$ by
  $\psi_1(z)+K\psi_2(z)$, and using the boundedness from above of $\psi_2$ to show that the constructed function $u$ belongs to $L^2_{e^{-\psi_1}}(\Omega)$.
\end{proof}

\begin{proof}[Proof of $(c)\Rightarrow(d)$ in Theorem~\ref{T:main}]
  The proof is done by contraposition. 
  Thus, we assume that there exists a bounded function $\psi\in\mathcal{C}^\infty(\p\setminus K)$ such that $i\partial\bar{\partial}\psi\geq \omega$ holds on $\p\setminus K$ for some volume form $\omega$ on $\p$. Note that, if $K$ was empty, then it would follow that there is a bounded, nonconstant subharmonic function on $\mathbb{C}$. Hence, we may assume that $K$ is nonempty. As before, we choose a  $q\in K$ and coordinates such that $q=[0:1]$
  
  Next, we note that, as in Subsection~\ref{SS:CT}, it suffices to show that $A^2_{\phi_k}(D_*)$, for $D_*:=\p\setminus K$, is infinite dimensional for each $k\in\Z$. %If $k\geq -2$, we refer to the same 
  %argument as in Subsection~\ref{SS:CT}.
 For that, we use Lemma~\ref{L:GaHaHemodification}. In particular, we set $\wt\psi_2=\psi\circ\varphi_z$. Then, clearly, $\wt\psi_2\in\mathcal{C}^\infty(D_*)$ is a bounded, strictly subharmonic function.
 Moreover, it follows that there is a constant $c_1>0$ such that
 \begin{align*}
   \triangle\wt\psi_2(z)\geq c_1(1+|z|^2)^{-2}\;\;\text{ for }\;\;z\in D_*.
 \end{align*}
 Next, set  $\psi_1:=-\ln(\phi_k)$. 
Then $\psi_1\in\mathcal{C}^\infty(D_*)$ and
 \begin{align}\label{E:Laplacemetric}
   \triangle\psi_1(z)=4(k+2)(1+|z|^2)^{-2}
 \end{align}
 on $D_*$. 
 Hence, there exists a constant $c>0$ such that
 \begin{align*}
   \triangle(\psi_1+c \wt\psi_2)>0\;\;\text{on }D_*.
 \end{align*}
Setting $\psi_2=c\wt\psi_2$,  
 it follows that the hypotheses of Lemma~\ref{L:GaHaHemodification} are satisfied for any open set $U\Subset D_*$. Thus, $A^2_{\phi_k}(D_*)$ is infinite dimensional.
\end{proof}

\subsection{Proof of $\mathbf{(d)}$ implies $\mathbf{(a)}$}\label{SS:(d)=>(a)}
\begin{proof}
This proof is also done by contraposition. That is, we assume that $K\subset\p$ is nonpolar. Since $K$ is nonempty, we may construct $\psi$ in a chart first. As before, we let $q\in K$ and choose coordinates  
such that $q=[0:1]$. We set $K_*=\varphi_z(K)$ and $D_*=\mathbb{C}\setminus K_*$.  
We shall use a function which was initially constructed in the proof of Proposition 5.1 in \cite{GaLeRa21}. 
For that, let $G\subset K_*$ be a nonpolar, compact set and  $\nu=\nu_{G}$ be the equilibrium measure of $G$. The associated potential function
 $p=p_{G}:\mathbb{C}\longrightarrow[-\infty,\infty)$ is defined as
 \begin{align*}
   p(z)=\int_{\mathbb{C}}\ln|z-w|\;d\nu(w)\;\;\text{for }z\in\mathbb{C}.
 \end{align*}
 Since $p$ is harmonic on $G^c$, see for instance \cite[Theorem 3.1.2]{Ra95}, it follows that
 $\triangle e^{-p}=e^{-p}|\nabla p|^2$ on $G^c$.
 Thus, $e^{-p}$ is subharmonic on $G^c$, and strictly subharmonic at all points in $G^c$ at which the gradient of $p$  is nonvanishing.
 It follows from the proof of Proposition~5.1 in \cite{GaLeRa21} that $|\nabla p|$ is strictly positive outside a sufficiently large disc containing $G$. In particular, there exist constants $\tau_1, R>0$ such that 
 $G\subset\mathbb{D}(0,R)$ 
 and
$$|\nabla p(z)|>\frac{\tau_1}{|z|}\;\;\text{for } z\in\mathbb{D}(0,2R)^c.$$ 
Furthermore, since $\nu$ is an equilibrium measure, one obtains for $z\in\mathbb{D}(0,2R)^c$ that
\begin{align*}
  p(z)=\int_{\mathbb{C}}\ln|z-w|\;d\nu(w)\leq\ln(3|z|/2).
\end{align*}
Therefore,
$$\triangle e^{-p(z)}\geq\frac{2}{3}\frac{\tau_1^2}{|z|^3} \;\;\text{for }z\in\mathbb{D}(0,2R)^c.$$
Strict subharmonicity on all of $G^c$ may now be achieved by adding to $e^{-p}$ a particular compactly supported function which is strictly subharmonic on $\mathbb{D}(0,2R)$. For instance, let $\chi\in\mathcal{C}^\infty_0(\mathbb{D}(0,R'))$ for some $R'>2R$ such that $\chi(z)=|z|^2$ on $\mathbb{D}(0,2R)$. Then, for $\epsilon>0$ sufficiently small, there exist $\tau_2,\tau_3>0$ such that the function
$\psi_*(z):=e^{-p(z)}+\epsilon\chi(z)$ satisfies
\begin{align}\label{E:Laplacepsi2}
  \triangle\psi_*(z)\geq 
  \left\{
     \begin{array}{lr}
       \frac{\tau_2}{|z|^3} & \text{for } z\in\mathbb{D}(0,2R)^c\\
       \tau_3 & \text{for } z\in\mathbb{D}(0,2R)
     \end{array}
   \right. .
\end{align}
It follows that $\psi:=\varphi_z^{-1}(\psi_*)$ satisfies $i\partial\bar{\partial}\psi\geq \omega$ on $\p\setminus K$ for some volume form $\omega$ on $\p$.
Moreover, Frostman's theorem implies that $e^{-p}$ is bounded on $G^c$, and hence, $\psi_*$ is bounded on $G^c$. That is, $\psi$ is bounded on $\p\setminus K$, and hence satisfies  $(d)$ in 
Theorem~\ref{T:main}.
\end{proof}

%%%%%%%%%%%%%%%%%%%%%%%%%%%%%%%%%%%%%%%%%%%%%%%%%%%%%%%%%%%%%%

\subsection{A corollary to Lemma~\ref{L:GaHaHemodification} on weighted Bergman spaces}\label{SS:Le}

A variation of Lemma~\ref{L:GaHaHemodification} yields an analogue of Theorem~\ref{T:WC} for the Bergman space $A^2_{e^{-\psi}}(\Omega)$ for $\psi$ subharmonic on $\Omega$, see 
Corollary~\ref{C:LeThesisminusK} below. 

The dimension of $A^2_{e^{-\psi}}(\C)$ for $\psi$ subharmonic on $\C$ is completely described by Borichev, Le, and Youssfi in \cite[Theorem 2.6]{Le21} in terms of the continuous 
part $(\mu_\psi)^c$  of the Riesz measure $\mu_\psi$ of $\psi$:
\begin{itemize}
  \item[$(a)$] $\dim A^2_{e^{-\psi}}(\C)<\infty$ iff $(\mu_\psi)^c(\C)<\infty$,
  \item[$(b)$] if $(\mu_\psi)^c(\C)<\infty$, then  
    \begin{align}\label{E:Le}
      \dim A^2_{e^{-\psi}}(\C)=\left\lceil\frac{(\mu_\psi)^c(\C)}{4\pi}\right\rceil,
    \end{align}  
  where $\lceil x\rceil$ is the largest integer less than $x$ for $x>0$ and $\lceil 0 \rceil=0$.%\footnote{We need to check on the later in Le's thesis}
\end{itemize}  

Recall that $ A^2(\p;\hol(k))$ is isomorphic to $A^2_{\phi_k}(\C)$, and that Lemma~\ref{L:Bergman} states that $\dim A^2(\p, \hol(k))=\max\{0,k+1\}$. Using \cite[Theorem 2.6]{Le21}, we can now 
rediscover Lemma~\ref{L:Bergman} for $k\geq-2$. In particular, for $k\geq -2$, the function $\psi:=-\ln(\phi_k)$ is subharmonic since
$$(\triangle\psi)(z)=4(k+2)(1+|z|^2)^{-2}.$$
It follows that $(\mu_\psi)^c=4(k+2)(1+|z|^2)^{-2}\lambda$, so that $(\mu_\psi)^c(\C)=4\pi(k+2)$. By \eqref{E:Le}, it follows that 
$\dim A^2_{\phi_k}(\C)=\max\{0,k+1\}$.

As in the case of unweighted Bergman spaces, the dimension of a weighted Bergman space for an open set in $\mathbb{C}$ may be determined through the polarity property of its complement as follows.

\begin{cor}\label{C:LeThesisminusK}
  Let $K\subset\mathbb{C}$ be a closed set, $\psi$ a subharmonic function on $\mathbb{C}$. Then the following hold.
  \begin{itemize}
    \item[(1)] If $K$ is nonpolar, then $A^2_{e^{-\psi}}(\C\setminus K)$ is infinite dimensional.
    \item[(2)] If $K$ is polar, then $A^2_{e^{-\psi}}(\C\setminus K)$ is isomorphic to $A^2_{e^{-\psi}}(\C)$.
  \end{itemize}
\end{cor}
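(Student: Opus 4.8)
The plan is to prove the two parts separately, handling the harder direction (1) via Lemma~\ref{L:GaHaHemodification} and the easier direction (2) via the removable singularity Lemma~\ref{L:removable}, mirroring the corresponding arguments in the proof of Theorem~\ref{T:main}.

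For part (1), suppose $K\subset\mathbb{C}$ is nonpolar and $\psi$ is subharmonic on $\mathbb{C}$. By Theorem~\ref{T:polar}, $K$ contains a compact nonpolar set $G$ with positive logarithmic capacity; let $\nu=\nu_G$ be its equilibrium measure and $p=p_G$ the associated potential. As in the proof of $(d)\Rightarrow(a)$ in Subsection~\ref{SS:(d)=>(a)}, the function $\psi_*(z)=e^{-p(z)}+\epsilon\chi(z)$ (for suitable compactly supported $\chi$ and small $\epsilon>0$) is a bounded-above, smooth function on $\mathbb{C}\setminus G$ that is strictly subharmonic there, with $\triangle\psi_*$ bounded below by a positive constant on some $\overline{U}$ with $U\Subset\mathbb{C}\setminus K$. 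The issue is that $\psi$ is only assumed subharmonic, not smooth, so Lemma~\ref{L:GaHaHemodification} cannot be applied verbatim with $\psi_1=\psi$. I would address this by first using a standard mollification/exhaustion argument: on any relatively compact open $\Omega'\Subset\mathbb{C}\setminus K$ one can uniformly approximate $\psi$ from above by smooth subharmonic functions, or simply invoke the version of the Gallagher--Haridas--Herbig construction that works for merely subharmonic weights (the construction in \cite{GaHaHe17} only needs $\psi_1+\psi_2$ subharmonic and $\psi_1$ locally bounded, which holds here since subharmonic functions are locally bounded above and locally $L^1$, hence locally $L^2_{e^{-\psi}}$-integrable weights behave well on compacta). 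Taking $\psi_1=\psi$, $\psi_2=\psi_*$ restricted to $\mathbb{C}\setminus K$, the hypotheses of (the appropriate form of) Lemma~\ref{L:GaHaHemodification} are met, yielding that $A^2_{e^{-\psi}}(\mathbb{C}\setminus K)$ is infinite-dimensional.

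For part (2), suppose $K$ is polar. I want to run the same removable-singularity argument as in the proof of $(a)\Rightarrow(b)$ in Subsection~\ref{SS:a=>b}. Given $f\in A^2_{e^{-\psi}}(\mathbb{C}\setminus K)$, for each $R>0$ set $D_R=\mathbb{D}(0;R)\setminus K$ and $f_R=f\cdot\chi_{D_R}$. Since $\psi$ is subharmonic, it is bounded above on $\overline{\mathbb{D}(0;R)}$, say $\psi\le M_R$ there; hence $e^{-\psi}\ge e^{-M_R}$ on $\mathbb{D}(0;R)$, so $f_R\in A^2(D_R)\subset A^2(\mathbb{D}(0;R)\setminus K)$ with $K$ a closed polar subset of $\mathbb{C}$. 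By Lemma~\ref{L:removable}, $f_R$ extends to $F_R\in A^2(\mathbb{D}(0;R))$; by the identity theorem these are compatible as $R\to\infty$ and glue to an $F\in\hol(\mathbb{C})$ with $F|_{\mathbb{C}\setminus K}=f$. Since $K$ has Lebesgue measure zero, $\|F\|_{A^2_{e^{-\psi}}(\mathbb{C})}=\|f\|_{A^2_{e^{-\psi}}(\mathbb{C}\setminus K)}<\infty$, and the restriction map $A^2_{e^{-\psi}}(\mathbb{C})\to A^2_{e^{-\psi}}(\mathbb{C}\setminus K)$ is the inverse isometry.

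The main obstacle is the regularity gap in part (1): Lemma~\ref{L:GaHaHemodification} as stated requires $\psi_1\in\mathcal{C}^\infty(\Omega,\mathbb{R})$, whereas here the ambient weight $\psi$ is merely subharmonic. The cleanest fix is to observe that the Hörmander $\dbar$-construction underlying \cite[Theorem 1]{GaHaHe17} only uses $\psi_1$ through the estimate $\int |u|^2 e^{-\psi_1}\,\lambda<\infty$ and through the hypothesis that $\psi_1+K\psi_2$ is subharmonic; both survive when $\psi_1$ is subharmonic (local boundedness from above gives local $L^1$ control, and one applies Hörmander on relatively compact pieces, or mollifies $\psi$ and passes to the limit). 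I would state this explicitly — perhaps as a one-line remark that Lemma~\ref{L:GaHaHemodification} holds with ``$\psi_1$ subharmonic'' in place of ``$\psi_1$ smooth'' — and then part (1) follows immediately by the choice of $\psi_2$ made in Subsection~\ref{SS:(d)=>(a)}.
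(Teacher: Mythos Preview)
Your overall strategy for both parts matches the paper's proof: for (2) you run the removable-singularity argument from Subsection~\ref{SS:a=>b} using upper semicontinuity of $\psi$ to get local upper bounds, and for (1) you take $\psi_2=\psi_*$ from Subsection~\ref{SS:(d)=>(a)} and appeal to a variant of Lemma~\ref{L:GaHaHemodification} with $\psi_1=\psi$ merely subharmonic. Part (2) is fine as written.

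There is, however, a genuine gap in your justification of the variant of Lemma~\ref{L:GaHaHemodification} needed for (1). You write that the construction ``only needs $\psi_1+\psi_2$ subharmonic and $\psi_1$ locally bounded,'' but a subharmonic $\psi$ is \emph{not} locally bounded below in general, and the relevant requirement is that $e^{-\psi}$ be integrable on the set $U$ where the $\overline\partial$-data is supported --- otherwise the H\"ormander solution, and the holomorphic functions produced from it, need not lie in $L^2_{e^{-\psi}}$. This integrability is \emph{not} automatic: take $\psi(z)=2\ln|z|$, which is subharmonic on $\C$, yet $e^{-\psi}=|z|^{-2}$ fails to be integrable near $0$. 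Your appeal to ``locally bounded above and locally $L^1$'' does not address this, and mollification does not help directly either: the standard smoothing satisfies $\psi_\varepsilon\geq\psi$, hence $e^{-\psi_\varepsilon}\leq e^{-\psi}$, so membership in $A^2_{e^{-\psi_\varepsilon}}$ is \emph{weaker} than membership in $A^2_{e^{-\psi}}$, and the limit goes the wrong way.

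The paper fills exactly this gap by invoking Propositions~2.1 and~2.2 of \cite{Jucha12}, which guarantee that for subharmonic $\psi$ on $\C$ the weight $e^{-\psi}$ is locally integrable away from a finite set of points. One then chooses $U\Subset\C\setminus K$ avoiding those finitely many points, and the extended form of Lemma~\ref{L:GaHaHemodification} (subharmonic $\psi_1$, with $e^{-\psi_1}$ integrable on $U$) applies. You should state this extra hypothesis explicitly and supply the reference; with that in place your argument for (1) goes through.
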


\begin{proof}
  To prove (1), we note first that Lemma~\ref{L:GaHaHemodification} still holds if $\psi_1$ is subharmonic, not necessarily smooth, on $\Omega$ as long as the open set $U\Subset\Omega$ may be chosen such 
  that $e^{-\psi_1}$ is 
  integrable on $U$. 
  
  Next, we note that Propositions 2.1 and 2.2 in \cite{Jucha12} imply that $e^{-\psi}$ is integrable except near finitely many points. Thus, we may choose an open set $U$ in $\C\setminus K$ such that 
  $e^{-\psi}$ is integrable on $U$. Now, choose $\psi_2$ equal to $\psi_{*}$ as defined above \eqref{E:Laplacepsi2}. Then $\psi_2$ is a bounded above, strictly subharmonic, smooth function on $\C\setminus K$.
  Thus, with $\psi_1:=\psi$, the claim follows from the above mentioned variation of Lemma~\ref{L:GaHaHemodification}

  For the proof of (2) we first note that $\psi$ is upper semicontinuous, which implies that $\psi$ is bounded from above on any compact subset of $\mathbb{C}$.  Write $\Omega$ for $\C\setminus K$, and let $f\in A^2_{e^{-\psi}}(\Omega)$. It then follows that
  $f\in A^2(\Omega\cap\mathbb{D}(0,R))$ for any $R>0$. We now may proceed as in the paragraph following \eqref{E:metric} and construct a function $F\in\mathcal{O}(\C)$ such that
  $F|_{\Omega}=f$. Since $K$ is of Lebesgue measure $0$, it then follows that $\|F\|_{L^2_{e^{-\psi}}(\C)}=\|f\|_{L^2_{e^{-\psi}}(\Omega)}$. Thus,
  $A^2_{e^{-\psi}}(\C\setminus K)$ is isomorphic to $A^2_{e^{-\psi}}(\C)$.
\end{proof}

%If the function $\psi_1$ in Lemma~\ref{L:GaHaHemodification} is a nonharmonic, subharmonic function on $\Omega$, choosing a $\psi_2$ might seem superfluous. That is in fact the case, if $\psi_1$ is 
%additionally bounded above on $\Omega$. Clearly, we may then choose $\psi_2:=\psi_1$, and Lemma~\ref{L:GaHaHemodification} then gives the infinite dimensionality of $A^2_{e^{-\psi_1}}(\Omega)$. 
%However, without the additional assumption of boundedness from above on $\psi_1$, the proof of Lemma~\ref{L:GaHaHemodification} with $\psi_2=\psi_1$ 
%only yields the infinite dimensionality of $A^2_{e^{-C
%\psi_1}}(\Omega)$ for some $C>0$.

%%%%%%%%%%%%%%%%%%%%%%%%%%%%%%%%%%%%%%%%%%%%%%%%%%%%%%%%%%%%%%
%%%%%%%%%%%%%%%%%%%%%%%%%%%%%%%%%%%%%%%%%%%%%%%%%%%%%%%%%%%%%%

\section{Finite dimensional Bergman spaces in $\mathbb P^2$}\label{S:counter}

In this section, we show that the dichotomy displayed by Bergman spaces on $\mathbb P^1$ does not hold in higher dimensional projective spaces. In particular, for every holomorphic line bundle $L$ on $\mathbb P^2$, there exists a domain $D\subset \mathbb P^2$ such that the dimension of the Bergman space $A^2(D;L)$ is finite, but strictly larger than that of the space of global holomorphic sections of $L$. The examples in this section are entirely motivated by Wiegerinck's examples in \cite{Wi84}.

In analogy with $\p$, we use the coordinates $\zt=[\zt_0:\zt_1:\zt_2]$ on $\mathbb P^2$. The open set $U^{(z,w)}=\mathbb P^2\setminus\{\zt:\zt_2=0\}$ is identified with $\C^2$ via
	\bes
		\varphi_{(z,w)}:\zt=[\zt_0:\zt_1:\zt_2]\mapsto (z,w)=
							\left(\frac{\zt_0}{\zt_2},\frac{\zt_1}{\zt_2}\right),
	\ees
and $(z,w)$ are referred to as the local coordinates on the affine chart $U^{(z,w)}$. Any holomorphic line bundle $\Pi:L\rightarrow\mathbb P^2$ is of the form $\hol(k)$, for some $k\in\Z$, where $\hol(k)$ is the line bundle associated to the divisor $k\{\ell\}$ for any fixed hyperplane $\ell\subset\mathbb P^1$. In the local coordinates $(z,w)$, a global section of $\hol(k)$ is given by a triplet, $s=(s_1,s_2,s_3)$, of holomorphic functions on $\C^2$ such that 
	\be\label{E:sectionsP2}
		s_1(z,w)=
		\begin{cases}
			z^{k}s_2(1/z,w/z),& \text{if}\ (z,w)\in\C^2\setminus\{z=0\},\\
						w^{k}s_3(z/w,1/w),& \text{if}\ (z,w)\in\C^2\setminus\{w=0\}.
		\end{cases}
			\ee
Similarly, a Hermitian metric on $\hol(k)$ is given by a triplet, $h=(h_1,h_2,h_3)$, of smooth positive functions on $\C^2$ that satisfy compatibility conditions analogous to \eqref{E:sectionsP2}. In view of Lemma~\ref{L:choice}, we fix the following smooth volume form on $\mathbb P^2$, and Hermitian metric on $\hol(k)$, $k\in\Z$, respectively, in the local coordinates $(z,w)$ on $U^{(z,w)}$:
\beas
	\om_{\text{FS}}(z,w)&=&(1+|z|^2+|w|^2)^{-3}\,dz\wedge d\zbar\wedge dw\wedge d\wbar\\
h_1(z,w)&=&(1+|z|^2+|w|^2)^{-k}.
\eeas
Finally, recall the following description of the space of global sections of $\hol(k)$, see \cite[p. 81]{Varolin11}.

\begin{lemma}\label{L:Bergman2} Let $k\in\Z$. In the local coordinates $(z,w)$ on the affine chart $U^{(z,w)}$, 
	\bes
		A^2(\mathbb P^2;\hol(k))=\Gamma(\mathbb P^2;\hol(k))=\begin{cases}
				\text{span}\{z^pw^q:(p,q)\in\N^2,\ p+q\leq k\},& \text{if}\ k\geq 0,\\
				\{f\equiv 0\},& \text{if}\ k< 0.	
				\end{cases}
	\ees
In particular, $\dim A^2(\mathbb P^2;\hol(k))=\dim\Gamma(\mathbb P^2;\hol(k))=\max\left\{0,\frac{(k+1)(k+2)}{2}\right\}$. 
\end{lemma}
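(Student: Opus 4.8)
The plan is to prove Lemma~\ref{L:Bergman2} by computing $\Gamma(\mathbb{P}^2;\hol(k))$ explicitly in the affine chart $U^{(z,w)}$, using the transition relations \eqref{E:sectionsP2}, and then invoking Lemma~\ref{L:choice} to identify $A^2(\mathbb{P}^2;\hol(k))$ with $\Gamma(\mathbb{P}^2;\hol(k))$. First I would fix $k\in\Z$ and let $s=(s_1,s_2,s_3)$ be a global holomorphic section, so that $s_1,s_2,s_3$ are entire on $\C^2$ and satisfy the compatibility conditions: $s_1(z,w)=z^k s_2(1/z,w/z)$ on $\{z\neq 0\}$ and $s_1(z,w)=w^k s_3(z/w,1/w)$ on $\{w\neq 0\}$. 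Writing the Taylor expansion $s_1(z,w)=\sum_{p,q\geq 0} a_{pq} z^p w^q$, I would substitute into the first relation to get $z^k s_2(1/z,w/z)=\sum_{p,q} a_{pq} z^p w^q$, i.e. $s_2(u,v)=\sum_{p,q} a_{pq} u^{k-p-q} v^q$ after the change of variables $u=1/z$, $v=w/z$. For $s_2$ to be holomorphic (in particular, bounded) near $u=0$, one needs $k-p-q\geq 0$ whenever $a_{pq}\neq 0$; that is, $a_{pq}=0$ unless $p+q\leq k$. In the case $k<0$ this forces $s_1\equiv 0$, hence $s\equiv 0$. For $k\geq 0$ it forces $s_1$ to be a polynomial supported on $\{p+q\leq k\}$, i.e. $s_1\in\mathrm{span}\{z^pw^q:(p,q)\in\N^2,\ p+q\leq k\}$.

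Conversely, I would check that every such polynomial $s_1$ does extend to a global section: given $s_1=\sum_{p+q\leq k} a_{pq} z^p w^q$, define $s_2(u,v)=\sum_{p+q\leq k} a_{pq} u^{k-p-q} v^q$ and $s_3(u,v)=\sum_{p+q\leq k} a_{pq} u^p v^{k-p-q}$; since $p+q\leq k$ all exponents are nonnegative, so $s_2,s_3$ are polynomials, hence entire, and a direct substitution confirms the compatibility relations hold on the overlaps. Thus $\Gamma(\mathbb{P}^2;\hol(k))=\mathrm{span}\{z^pw^q:(p,q)\in\N^2,\ p+q\leq k\}$ for $k\geq 0$ and is trivial for $k<0$. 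The equality $A^2(\mathbb{P}^2;\hol(k))=\Gamma(\mathbb{P}^2;\hol(k))$ is then immediate from Lemma~\ref{L:choice} (with $M=\mathbb{P}^2$ compact), since a global holomorphic section is automatically square-integrable against the continuous positive density $h(s,s)\om_{\mathrm{FS}}$ on the compact manifold, and conversely any element of the Bergman space on all of $\mathbb{P}^2$ is a global holomorphic section.

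Finally, the dimension count: for $k\geq 0$ the monomials $\{z^pw^q: p,q\geq 0,\ p+q\leq k\}$ are linearly independent, and their number is $\sum_{j=0}^{k}(j+1)=\frac{(k+1)(k+2)}{2}$; for $k<0$ the space is $\{0\}$, giving dimension $0$. This yields $\dim A^2(\mathbb{P}^2;\hol(k))=\max\{0,\tfrac{(k+1)(k+2)}{2}\}$, as claimed. I do not anticipate a genuine obstacle here, since this is a standard computation; the only point requiring a little care is the bookkeeping of exponents in the transition relations (making sure the direction of the inequality $p+q\leq k$ comes out correctly from demanding holomorphicity of $s_2$ and $s_3$ at the origin, rather than merely away from it), and noting that the two compatibility conditions in \eqref{E:sectionsP2} together impose no constraint beyond $p+q\le k$.
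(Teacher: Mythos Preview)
Your proposal is correct. Note, however, that the paper does not actually prove Lemma~\ref{L:Bergman2}: it is stated as a standard fact with a reference to \cite[p.~81]{Varolin11}, and the equality $A^2=\Gamma$ on all of $\mathbb{P}^2$ is already contained in Lemma~\ref{L:choice}. Your direct computation via the transition relations~\eqref{E:sectionsP2} is exactly the elementary argument one would give to unpack that citation, so there is nothing to compare against beyond observing that you have supplied the details the paper chose to omit.
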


\begin{theorem}\label{T:WiegP2} Given $k\in\Z$, there exists a domain $\Om_{k}\subset\mathbb P^2$ such that 
	\bes
		\dim A^2(\mathbb P^2;\hol(k))<\dim A^2(\Om_{k};\hol(k))<\infty.
	\ees
In particular, $\dim A^2(\Om_{k};\hol(k))=1$ for $k<-2$, and 	
$\dim A^2(\Om_{k};\hol(k))=(k+3)(k+4)/2$ for $k\geq -2$.
\end{theorem}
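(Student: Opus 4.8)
The plan is to adapt Wiegerinck's construction from \cite{Wi84} to the line-bundle setting on $\mathbb P^2$. Work in the affine chart $U^{(z,w)}\cong\C^2$ with the fixed metric $h_1$ and the Fubini--Study form above. Write $\phi_k(z,w):=(1+|z|^2+|w|^2)^{-k-3}$ for the density, so that $A^2(\Om;\hol(k))\cong A^2_{\phi_k}(\varphi_{(z,w)}(\Om))$ for any $\Om\subset U^{(z,w)}$. The idea is to choose $\Om_k$ to be (the image in $\mathbb P^2$ of) a Wiegerinck-type domain in $\C^2$ of the form
\[
	W=\{(z,w)\in\C^2: |w|<1,\ |z|<\rho_1(|w|)\}\ \cup\ \{(z,w)\in\C^2: |w|<1,\ |z|>\rho_2(|w|)\},
\]
a union of two ``Hartogs-like'' pieces separated by an annular gap in the $z$-variable, with the radii $\rho_1,\rho_2$ decaying to $0$ fast (e.g. super-exponentially) as $|w|\to 1$. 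On such a domain a holomorphic function has, for each fixed $w$, a Laurent expansion in $z$ whose coefficients are holomorphic in $w$; the decay of $\rho_1,\rho_2$ near the boundary circle $|w|=1$ forces these coefficient functions to extend holomorphically across $|w|=1$ and to be polynomials in $w$ of controlled degree, while $L^2$-integrability against $\phi_k$ restricts which powers $z^p$, $p\in\Z$, can appear. Counting the surviving monomials $z^pw^q$ will give exactly the claimed dimension.

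The key steps, in order: (1) Set up the Laurent decomposition: for $f\in A^2_{\phi_k}(W)$ write $f(z,w)=\sum_{p\in\Z}a_p(w)z^p$ on each piece, with $a_p$ holomorphic on $|w|<1$; use Cauchy estimates on circles $|z|=$ const (inside the appropriate piece) together with the rapid decay of $\rho_1,\rho_2$ to bound $|a_p(w)|$ by a constant times a rapidly growing/decaying power of $(1-|w|)^{-1}$, hence to conclude that $a_p$ is a polynomial in $w$ whose degree is at most some explicit affine function of $p$ (this is the Wiegerinck mechanism). (2) Impose $L^2$-integrability: compute $\int_W |z^pw^q|^2\phi_k\,\lambda$ by passing to polar-type coordinates in $z$ and $w$; since near $|z|=\infty$ the weight $\phi_k$ behaves like $|z|^{-2k-6}$, the term $z^p$ with $p>0$ is integrable only up to $p\le$ some bound (roughly $p\le k+2$ from the outer piece), and near $z=0$ the powers $p<0$ contribute via the outer piece as well; combine with the degree bound on $a_q$ from step~(1). (3) Extract the finite list of admissible pairs $(p,q)$ and verify, using Lemma~\ref{L:Bergman2} (equivalently a direct monomial count), that the total is $(k+3)(k+4)/2$ when $k\ge -2$ and $1$ when $k<-2$, and that in all cases this strictly exceeds $\max\{0,(k+1)(k+2)/2\}=\dim A^2(\mathbb P^2;\hol(k))$. (4) Confirm $\Om_k$ is a domain (connected open set) in $\mathbb P^2$ and that $\Om_k\ne\mathbb P^2$, so the comparison is meaningful, and check finiteness of the dimension follows from the explicit count.

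The main obstacle I expect is step~(2) combined with the bookkeeping in step~(1): one must choose the boundary-decay rates $\rho_1(|w|),\rho_2(|w|)$ and the width of the separating annulus \emph{simultaneously} so that (i) the Laurent coefficients $a_p$ are forced to be polynomials of exactly the right degree, (ii) the $\phi_k$-weighted $L^2$ condition cuts the range of $p$ down to precisely the desired finite set and no larger, and (iii) no extra cancellation between the two pieces of $W$ reintroduces functions one wants to exclude (the gap must be genuinely ``fat'' in the sense of having positive capacity in each slice so that the two Laurent developments agree and glue to a single function on $W$, yet the domain must not be Runge-like in a way that kills the extra sections). Balancing these three requirements is the delicate part; once the geometry of $W$ is pinned down, the dimension count is a routine, if slightly lengthy, verification. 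A secondary technical point is handling the $k<-2$ case separately, where $\phi_k$ grows at infinity and only the constant section (plus possibly one more, giving dimension $1$) survives — here one checks that the sole contribution is from $(p,q)=(0,0)$ after the integrability constraint, which still beats $\dim\Gamma(\mathbb P^2;\hol(k))=0$.
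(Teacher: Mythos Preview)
Your plan takes a different route from the paper, and as written it has a genuine gap. The paper does not use any Laurent--coefficient/extension mechanism at all. Instead it builds $\Om_k$ as a Reinhardt domain in the affine chart $\C^2$: a fixed bidisc $B=\{\max(|z|,|w|)<2\}$ together with three thin ``tentacles'' $X_\ell,Y,Z_m$ going to infinity along the $z$--axis, the $w$--axis, and the diagonal $|z|=|w|$, respectively. Because $\Om_k$ is Reinhardt, the monomials $z^pw^q$ form an orthogonal basis for $A^2_{\phi_k}(\Om_k)$, and the whole proof reduces to deciding, by explicit polar--coordinate integration, for which $(p,q)\in\N^2$ each of the integrals $\int_{X_\ell},\int_{Y},\int_{Z_m}|z^pw^q|^2\phi_k$ is finite. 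Each tentacle contributes one linear inequality in $(p,q)$; the finitely many common solutions give exactly the stated dimensions, with $\ell$ and $m$ chosen depending on $k$.

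The gap in your plan is in the $w$--direction. Your domain $W$ sits inside $\{|w|<1\}$, so for $k\ge -1$ the weight $\phi_k=(1+|z|^2+|w|^2)^{-(k+3)}$ is integrable in $z$ over the outer piece $\{|z|>\rho_2(|w|)\}$ uniformly for $|w|<1$; hence every monomial $w^q$, $q\ge 0$, lies in $A^2_{\phi_k}(W)$ and the space is infinite dimensional. No choice of $\rho_1,\rho_2$ repairs this, because the obstruction is the boundedness of $|w|$, not the boundary behaviour of the radii. Relatedly, step~(1) cannot work as stated: a holomorphic function $a_p$ on the unit disc satisfying a bound $|a_p(w)|\lesssim (1-|w|)^{-N}$ need not extend across $|w|=1$, let alone be a polynomial; polynomiality would require $a_p$ to be entire with polynomial growth at $\infty$, and your domain carries no information for $|w|\ge 1$. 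The paper sidesteps all of this by making $\Om_k$ unbounded in \emph{every} direction (that is precisely the role of $Y$ and $Z_m$), so that the decay of $\phi_k$ at infinity imposes constraints on $q$ and on $p+q$, not just on $p$. If you add unbounded tentacles in the $w$--direction to fix the gap, you are essentially back to the paper's construction, and at that point the Reinhardt/monomial argument is both simpler and sharper than tracking Laurent coefficients.
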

\begin{proof} It suffices to produce a domain in the chart $U^{(z,w)}$, i.e., a domain $\Om_{k}\subset\C^2$ such that $\dim A^2_{\phi_k}(\C^2)<\dim A^2_{\phi_k}(\Om_{k})<\infty$, where
	\bes
		\phi_k(z,w)=(1+|z|^2+|w|^2)^{k+3},\qquad (z,w)\in\C^2.
	\ees
 The desired domain will be of the form $B\cup X_\ell\cup Y\cup Z_m$, for appropriately chosen $\ell,m\in\N$, where 
	\beas
		B&=&\left\{(z,w)\in\C^2:\max\{|z|,|w|\}<2\right\},\\
		X_\ell&=&\left\{(z,w)\in\C^2:|z|>\sqrt{2},\ |w|<1/|z|^\ell\right\},\quad \ell\in\N,\\
%		X_\ell&=&\{(z,w)\in\C^2: |w|< \min\{|z|,1/|z|^\ell \}\},\quad \ell\in\N,\\
%		Y&=&\{(z,w)\in\C^2:|z|<\min\{|w|,1/|w|\} \ \},\\
		Y&=&\left\{(z,w)\in\C^2:|w|>\sqrt{2},\ |z|<1/|w|\right\},\\
		Z_m&=&\left\{(z,w)\in\C^2:|z|^2+|w|^2>2,\ \big||z|-|w|\big|<\frac{1}{(|z|+|w|)^m}\right\},\quad m\in\N\setminus\{0,1\}.
	\eeas
%Set
	%\bes
		%U_k=\begin{cases}
			%		X_n\cup Y,& \text{if}\ k\geq -2,\\
				%	X_{1-2(k+2)}\cup Y,& \text{if}\ k<-2.
				%\end{cases}
	%\ees
In the rest of the proof, $A^2_{k,\text{mon}}(\Om)$ denotes the set of monomials in $A^2_{\phi_k}(\Om)$, for any $\Om\subset\C^2$. We claim that 
	\bea
		A^2_{k,\text{mon}}(B)&=&\{z^pw^q:(p,q)\in\N^2\},\label{E:B}\\
		A^2_{k,\text{mon}}(X_\ell)&=&\{z^pw^q:(p,q)\in\N^2,\ p-\ell q\leq \ell+k +1\}, \label{E:X}\\
		A^2_{k,\text{mon}}(Y)&=&\{z^pw^q:(p,q)\in\N^2,\ q-p\leq k+2\}, \label{E:Y}\\
		A^2_{k,\text{mon}}(Z_m)&=&\{z^pw^q:(p,q)\in\N^2,\ 2(p+q)\leq m+2k+2\}. \label{E:Z}
	\eea
Assuming \eqref{E:B}-\eqref{E:Z} for the moment, set
	\be\label{E:dom}
		\Om_{k}=
		\begin{cases}
			B\cup X_1\cup Y\cup Z_2,\ & \text{if}\ k\geq -2,\\
			B\cup X_{1-2(k+2)}\cup Y\cup Z_{-2(2k+3)},\ & \text{if}\ k<-2.
		\end{cases}
	\ee
Since $\Om_k$ is Reinhardt for any $k\in\Z$, $A^2_{k,\text{mon}}(\Om_k)$ is a basis for $A^2_{\phi_k}(\Om_k)$. Thus, $\dim A^2_{\phi_k}(\Om_k)$ is the cardinality of  $A^2_{k,\text{mon}}(\Om_k)$. From \eqref{E:dom}, if $k\geq -2$, 
	\bes
		A^2_{k,\text{mon}}(\Om_k)=\{z^pw^q:(p,q)\in\N^2,\ |p-q|\leq k+2,\ p+q\leq k+2\},
	\ees 
which is of cardinality  $\frac{(k+3)(k+4)}{2}$. On the other hand, if $k<-2$, i.e., $k+2=-t$ for some $t\geq 1$, then 
\bes
A^2_{k,\text{mon}}(\Om_k)=\left\{z^pw^q:(p,q)\in\N^2,\ \frac{p-t}{1+2t}\leq q\leq p-t,\ p+q\leq t\right\}=\{z^t\}.
\ees
Thus, recalling Lemma~\ref{L:Bergman2}, $\dim A^2_{\phi_k}(\C^2)<\dim A^2_{\phi_k}(\Om_{k})<\infty$, in either case. 

It now remains to prove \eqref{E:B}-\eqref{E:Z}. We use the notation $L\approx M$ for $L,M\in\mathbb{R}$ to mean that there exist constants $c,d>0$ such that
$cM\leq L\leq dM$.
 Since \eqref{E:B} is clear, and \eqref{E:Y} follows from \eqref{E:X}, we only need to prove \eqref{E:X} and \eqref{E:Z}.
To find the monomials which are contained in $A^2_{\phi_k}(X_\ell)$, consider
	\beas
		\wt X_\ell&=&\left\{(r,s): r>\sqrt{2}, 0< s <1/r^\ell\right\}.
\eeas
For $p,q,\ell\in\N$, we get
$$||z^pw^q||^2_{A^2_{\phi_k}(X_\ell)}=(2\pi)^2\int_{\wt X_\ell}
		\frac{r^{2p+1}s^{2q+1}}{(1+r^2+s^2)^{3+k}}\;ds\;dr=:(2\pi)^2\cdot \mathcal{J}.$$
To determine the range of $p$ and $q$ for which $\mathcal{J}$ is finite for a given $\ell$, we consider $\wt X_\ell$ in polar coordinates, i.e.,
$$\wt X_\ell=\{(R\cos\theta, R\sin\theta): R\cos\theta>\sqrt{2},\; R^{\ell+1}\sin\theta(\cos\theta)^\ell<1, \theta\in(0,\pi/2)\},$$
and introduce
$$\mathcal{J}(a,g):=\int_{a}^{\infty}\int_{0}^{g(R)}\frac{R^{2p+2q+2}}{(1+R^2)^{3+k}}(\cos\theta)^{2p+1}(\sin\theta)^{2q+1} R\;d\theta\;dR,$$		
where $a>0$ is a constant and $g$ is a positive, continuous function. We notice that $\theta\in(0,\pi/4)$ on $\wt X_\ell$. It then follows that
$$ \mathcal{J}(2,\arcsin(R^{-\ell-1}))\leq \mathcal{J}\leq\mathcal{J}(\sqrt{2}, \arcsin(\sqrt{2}^\ell R^{-\ell-1})),$$
 because, firstly, any point in $\wt X_\ell$ satisfies $R>\sqrt{2}$ and $\theta\in(0,\arcsin(\sqrt{2}^\ell R^{-\ell-1}))$, and, secondly,
 any point with $R>2$ and $\theta\in(0,\arcsin(R^{-\ell-1}))$ is a point in $\wt X_\ell$. Hence, it remains to estimate $\mathcal{J}(a,\arcsin(cR^{-\ell-1})$ for $c\in\{1,\sqrt{2}^\ell\}$. Again,
 since $\theta\in(0,\pi/4)$ on $\wt X_\ell$, it follows that
 \begin{align*}
   \mathcal{J}(a,\arcsin(c R^{-\ell-1}))
   &=\int_{a}^\infty\frac{R^{2p+2q+2}}{(1+R^2)^{3+k}}\left(\int_{0}^{\arcsin (cR^{-\ell-1})} (\cos\theta)^{2p+1}(\sin\theta)^{2q+1}\;d\theta\right)R\;dR\\
&\approx \int_{a}^\infty\frac{R^{2p+2q+2}}{(1+R^2)^{3+k}}\left(\int_{0}^{\arcsin (cR^{-\ell-1})} \cos\theta (\sin\theta)^{2q+1} \;d\theta\right)R\;dR\\
&\approx \int_{a}^\infty R^{2p+2q+3-(\ell+1)(2q+2)-2(3+k)}\;dR,
 \end{align*}
which converges if and only if $2p+2q+3-(\ell+1)(2q+2)-2(3+k)<-1$. It follows that $z^pw^q\in A^2_{\phi_k}(X_\ell)$ if and only if
	\be\label{E:cond1}
		p-\ell q\leq k+\ell+1.
	\ee	
In a similar fashion, we  find the monomials in $A^2_{\phi_k}(Z_m)$. For that, we write
	\beas
	\wt Z_m=\left\{(r,s):r^2+s^2>2,\ |r-s|(r+s)^m<1\right\}
	\eeas
and note that 
for $p,q,m\in\N$ 
	\begin{align*}
		||z^pw^q||^2_{A^2_{\phi_k}(Z_m)}
	=(2\pi)^2\int_{\wt Z_m}\frac{r^{2p+1}s^{2q+1}}{(1+r^2+s^2)^{3+k}}\;dr\;ds:=(2\pi)^2\cdot\mathcal{I}.	
	\end{align*}
As before, we introduce polar coordinates on $\wt Z_m$. It is straightforward to check that then
\begin{align*}
  \wt Z_m
  &= \left\{(R\cos\theta,R\sin\theta):R>\sqrt{2},\, \left|\sin\left(\textstyle\frac{\pi}{4}-\theta\right)\right|
  \cos^m\left(\textstyle\frac{\pi}{4}-\theta\right)<(\sqrt{2}R)^{-m-1},\theta\in(0,\textstyle\frac{\pi}{2})\right\}\\
  &=\left\{\left(R\cos\left(\textstyle\frac{\pi}{4}-\psi\right), R\sin\left(\textstyle\frac{\pi}{4}-\psi\right)\right):
  R>\sqrt{2},\,\left|\sin\psi\right|\cos^m\psi<(\sqrt{2} R)^{-m-1}, \psi\in(-\textstyle\frac{\pi}{4},\textstyle\frac{\pi}{4})
  \right\}.
\end{align*}
We work in the coordinates $(R,\psi)$ to estimate $\mathcal{I}$, utilizing integrals of the form
\begin{align*}
  \mathcal{I}(g):=\int_{\sqrt{2}}^\infty\int_{-g(R)}^{g(R)}
  \frac{R^{2p+2q+3}}{(1+R^2)^{3+k}}
	 \cos^{2p+1}\left(\textstyle\frac{\pi}{4}-\psi\right) \sin^{2q+1}\left(\textstyle\frac{\pi}{4}-\psi\right)  \;d\psi \;dR
\end{align*} 
for some positive, continuous function $g$. Since $\psi\in[-\pi/4,\pi/4)$, it follows that
\begin{align*}
  \mathcal{I}\Bigl(\arcsin\bigl((\sqrt{2}R)^{-m-1}\bigr)\Bigr)\leq \mathcal{I}\leq\mathcal{I}\left(\arcsin\left(\sqrt{2}R^{-m-1}\right)\right).
\end{align*}
Since $m\geq 2$, both trigonometric functions in the definition of $\mathcal{I}(g)$ are strictly positive for $|\psi|\leq|\arcsin(\sqrt{2}R^{-m-1})|$. 
Therefore,
\begin{align*}
  \mathcal{I}\left(\arcsin(cR^{-m-1})\right)\approx
  \int_{\sqrt{2}}^\infty\int_{-\arcsin(cR^{-m-1})}^{\arcsin(cR^{-m-1})}
  \frac{R^{2p+2q+3}}{(1+R^2)^{3+k}}
	  \;d\psi \;dR
\end{align*}
for $c\in\{\sqrt{2},\sqrt{2}^{-m-1}\}$.
Moreover, $|t/2|\leq|\sin t|\leq |t|$ holds for $|t|\leq\pi/4$. Thus
\begin{align*}
  \mathcal{I}\left(\arcsin(cR^{-m-1}\right)&\approx\int_{\sqrt{2}}^\infty R^{2p+2q+3-(m+1)-2(3+k)}dR,
\end{align*}
which converges if and only if 
	\be\label{E:cond2}
		2(p+q)\leq m+2k+2.
	\ee
This completes the proof of \eqref{E:Z}, and thus, of Theorem~\ref{T:WiegP2}.
\end{proof}

%----------------------------
\bibliography{referencesBergmanRS}{}
\bibliographystyle{plain}
%----------------------------
\end{document}